\begin{document}

%%%%%%%%%%%%%%%%%%%%%%%%%%%%%%%%%%%%%%%%%%%%%%%%%%%%%%

\theoremstyle{plain}
\newtheorem{theorem}{Theorem}[section]
\newtheorem{introtheorem}{Theorem}

\theoremstyle{plain}
\newtheorem{proposition}[theorem]{Proposition}
\newtheorem{prop}[theorem]{Proposition}

\theoremstyle{plain}
\newtheorem{corollary}[theorem]{Corollary}

\theoremstyle{plain}
\newtheorem{lemma}[theorem]{Lemma}

\theoremstyle{plain}
\newtheorem{expectation}[theorem]{Expectation}

\theoremstyle{remark}
\newtheorem{remind}[theorem]{Reminder}

\theoremstyle{definition}
\newtheorem{condition}[theorem]{Condition}

\theoremstyle{definition}
\newtheorem{construction}[theorem]{Construction}

\theoremstyle{definition}
\newtheorem{definition}[theorem]{Definition}

\theoremstyle{definition}
\newtheorem{question}[theorem]{Question}

\theoremstyle{definition}
\newtheorem{example}[theorem]{Example}

\theoremstyle{definition}
\newtheorem{notation}[theorem]{Notation}

\theoremstyle{definition}
\newtheorem{convention}[theorem]{Convention}

\theoremstyle{definition}
\newtheorem{assumption}[theorem]{Assumption}
\newtheorem{isoassumption}[theorem]{Isomorphism Assumption}

\newtheorem{finassumption}[theorem]{Finiteness Assumption}
\newtheorem{indhypothesis}[theorem]{Inductive Hypothesis}

\theoremstyle{remark}
\newtheorem{remark}[theorem]{Remark}

\numberwithin{equation}{subsection}

%\theoremstyle{sctext}
%\newtheorem*{prf}{Proof}

%\newenvironment{prooff}{\begin{proof}}{\end{proof}}
%TODO: replace prooff by proof in files, then get rid of this

%%%DeclareMathOperator%%%

%%%%%%%%%%% Adaptations %%%%%%%%%%%%%%%%

\newcommand{\mathscr}[1]{\mathcal{#1}}
\newcommand{\TODO}{{\color{red} TODO}}
\newcommand{\CHANGE}{{\color{red} CHANGE}}
\newcommand{\annette}[1]{{\color{red}Annette: #1 }}
\newcommand{\simon}[1]{{\color{green}Simon: #1 }}
\newcommand{\giuseppe}[1]{{\color{blue}Giuseppe: #1 }}

\newcommand{\tensor}{\otimes}
\newcommand{\Fil}{\tn{Fil}}
\newcommand{\Gh}{\mathcal{G}}
\newcommand{\Fh}{\shfF}
\newcommand{\Oh}{\mathcal{O}}
\newcommand{\GrVec}{\textnormal{GrVec}}
\newcommand{\GSp}{\textnormal{GSp}}
\newcommand{\BN}{\textnormal{BN}}
\newcommand{\modulo}{\textnormal{mod}}
\newcommand{\CH}{\mathrm{CH}}
\newcommand{\im}{\mathrm{im}}
\newcommand{\cl}{\mathrm{cl}}
\newcommand{\prim}{\mathrm{prim}}
\newcommand{\Clo}{\mathrm{Clo}}
\newcommand{\CHM}{\mathrm{CHM}}
\newcommand{\GL}{\mathrm{GL}}
\newcommand{\NUM}{\mathrm{NUM}}
\newcommand{\ab}{\mathrm{ab}}
\newcommand{\num}{\mathrm{num}}
\newcommand{\id}{\mathrm{id}}
\newcommand{\isocan}{\xrightarrow{\hspace{1.85pt}\sim \hspace{1.85pt}}}
\newcommand{\isom}{\cong}
\newcommand{\red}{\mathrm{red}}
\newcommand{\Betti}{R_B}
\newcommand{\ladic}{R_\ell}
\newcommand{\Hodge}{R_H}
\newcommand{\MHM}{\mathrm{MHM}}
\newcommand{\Hh}{\mathcal{H}}
\newcommand{\A}{\mathbb{A}}
\newcommand{\gm}{\mathrm{gm}}
\newcommand{\qfh}{\mathrm{qfh}}
\newcommand{\DM}{\tn{\tbf{DM}}}
\newcommand{\DA}{\tn{\tbf{DA}}}
\newcommand{\DMeff}{\tn{\tbf{DM}}^{\eff}}
\newcommand{\DAeff}{\tn{\tbf{DA}}^{\eff}}
\newcommand{\DAgm}{\tn{\tbf{DA}}_{\gm}}
\newcommand{\Bei}{\mathcyr{B}}
\newcommand{\kd}{\tn{kd}}
\newcommand{\Sm}{\tn{\tbf{Sm}}}
\newcommand{\SmCor}{\tn{\tbf{SmCor}}}
\newcommand{\cor}{\tn{\tbf{cor}}}
\newcommand{\Mor}{\textnormal{Mor}}
\newcommand{\Hom}{\textnormal{Hom}}
\newcommand{\End}{\textnormal{End}}
\newcommand{\sss}{\textnormal{ss}}
\newcommand{\Sh}{\tn{\tbf{Sh}}}
\newcommand{\et}{\tn{\'{e}t}}
\newcommand{\an}{\tn{an}}
\newcommand{\D}{\tn{D}}
\newcommand{\eff}{\tn{eff}}
\newcommand{\DMgm}{\DM_{\tn{gm}}}
\newcommand{\vp}{\varphi}
\newcommand{\Sym}{\textnormal{Sym}}
\newcommand{\OSym}{\textnormal{coSym}}
\newcommand{\Mof}{M_1}
\newcommand{\CGS}{\tn{\tbf{cGrp}}}
\newcommand{\tr}{\textnormal{tr}}
\newcommand{\one}{\mathds{1}}
\newcommand{\Spec}{\textnormal{Spec}}
\newcommand{\Sch}{\tn{\tbf{Sch}}}
\newcommand{\Nor}{\tn{\tbf{Nor}}}
\newcommand{\Lie}{\tn{Lie}}
\newcommand{\Ker}{\tn{Ker}}
\newcommand{\Frob}{\tn{Frob}}
\newcommand{\Ver}{\tn{Ver}}
\newcommand{\N}{\mathbb{N}}
\newcommand{\Z}{\mathbb{Z}}
\newcommand{\Q}{\mathbb{Q}}
\newcommand{\Ql}{\mathbb{Q}_{\ell}}
\newcommand{\R}{\mathbb{R}}
\newcommand{\C}{\mathbb{C}}
\newcommand{\G}{\mathbb{G}}

\newcommand{\xra}{\xrightarrow}
\newcommand{\xla}{\xleftarrow}
\newcommand{\sxra}[1]{\xra{#1}}
\newcommand{\sxla}[1]{\xla{#1}}

\newcommand{\sra}[1]{\stackrel{#1}{\ra}}
\newcommand{\sla}[1]{\stackrel{#1}{\la}}
\newcommand{\slra}[1]{\stackrel{#1}{\lra}}
\newcommand{\sllra}[1]{\stackrel{#1}{\llra}}
\newcommand{\slla}[1]{\stackrel{#1}{\lla}}

\newcommand{\ira}{\stackrel{\simeq}{\ra}}
\newcommand{\ila}{\stackrel{\simeq}{\la}}
\newcommand{\ilra}{\stackrel{\simeq}{\lra}}
\newcommand{\illa}{\stackrel{\simeq}{\lla}}

\newcommand{\ul}[1]{\underline{#1}}
\newcommand{\tn}[1]{\textnormal{#1}}
\newcommand{\tbf}[1]{\textbf{#1}}

\newcommand{\Spt}{\mathop{\mathbf{Spt}}\nolimits}
\newcommand{\ra}{\rightarrow}
\newcommand{\old}{\mathrm{old}}

\setcounter{tocdepth}{1}

\title{Conservativity of realizations on motives of abelian type over $\mathbb{F}_q$}

\author{Giuseppe Ancona}
\address{Institut de Recherche Math\'ematique Avanc\'ee, Universit\'e de Strasbourg}
\email{ancona@math.unistra.fr}

\begin{abstract}
We show that the $\ell$-adic realization functor is conservative when restricted to the Chow motives of abelian type over a finite field.

A weak version of this conservativity result extends to mixed motives of abelian type.
\end{abstract}

\maketitle
\begin{center}
\today
\end{center}
\tableofcontents
%%%%%%%%%%%%%%%%%%%%%%%%%%%%%%%%%%%%%
\section*{Introduction}

Let $k$ be a base field and $\DMgm(k)_{\Q}$ be Voevodsky's category of mixed motives over $k$ with rational coefficients. Let $\ell$ be a prime number invertible in $k$, and consider the $\ell$-adic realization functor
\[R_{\ell}: \DMgm(k)_{\Q} \rightarrow D^b(\Q_{\ell})\]
to the bounded derived category of $\Q_{\ell}$-vector spaces.

One of the central conjectures in motives predicts that this functor is conservative (i.e. it detects isomorphisms), see  \cite{consev}  for an overview on this conjecture.  This conjecture is deep and still widely open: the case of surfaces would imply Bloch's conjecture for surfaces.

In this paper we focus on the dimension one case (ore equivalently on abelian varieties), more precisely we deal with the following categories.
\begin{definition}\label{defabeliancat}
Define $\CHM^{\ab}(k)_{\Q}$ to be the smallest rigid and pseudo-abelian full subcategory of  $\DMgm(k)_{\Q}$ containing motives of abelian varieties. Define $\DM^{\ab}(k)_{\Q}\supset \CHM^{\ab}(k)_{\Q}$ to be the smallest triangulated, rigid and pseudo-abelian full subcategory of  $\DMgm(k)_{\Q}$  containing motives of abelian varieties. 
\end{definition}

In characteristic zero Wildeshaus showed that $R_{\ell}$ is conservative when restricted to $\DM^{\ab}(k)_{\Q}$ \cite[Theorem 1.12]{WildPic}. He first deals with the subcategory $\CHM^{\ab}(k)_{\Q}$ and then treats the whole $\DM^{\ab}(k)_{\Q}$. Both steps use the fact that homological and numerical equivalence coincide on abelian varieties in characteristic zero.

In positive characteristic homological and numerical equivalence are not known to coincide. The only known result is due to Clozel.

\begin{theorem}\cite{clozel}
Given an abelian variety over a finite field, the set of prime numbers $\ell$ for which numerical and $\ell$-adic homological equivalence coincide is of positive density.
\end{theorem}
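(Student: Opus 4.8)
\emph{Proof proposal.}

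The plan is to fix the abelian variety $A$ over $k=\mathbb{F}_q$ and to produce a positive density set of primes $\ell$ for which, for every $n$, the $\Q$-algebra $B_n^{(\ell)}:=\End\bigl(h(A^n)\bigr)$ of self-correspondences of $A^n$ taken modulo $\ell$-adic homological equivalence is \emph{semisimple}. This suffices: I claim $\ell$-adic homological and numerical equivalence agree on all cycles on all $A^n$ if and only if every $B_n^{(\ell)}$ is semisimple. One direction is the elementary fact that a nondegenerate trace form has no radical (if the two equivalences agree, the composition pairing $\Hom(M,N)\times\Hom(N,M)\to\Q$ is nondegenerate for all $M,N$, in particular the trace form on $B_n^{(\ell)}$ is, hence $B_n^{(\ell)}$ has no radical, hence is semisimple). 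For the other direction, note that by definition the $\ell$-adic cycle class map is injective on $B_n^{(\ell)}$, so $H^*(A^n_{\overline k},\Ql)$ is a faithful module over it; if $B_n^{(\ell)}$ is semisimple, then by Wedderburn and separability in characteristic $0$ it is a product of matrix algebras over division $\Q$-algebras, and the trace form attached to a faithful module over such an algebra is nondegenerate — and that nondegeneracy says precisely that a numerically trivial correspondence on $A^n$ has vanishing $\ell$-adic class (the reverse implication being automatic). So the theorem reduces to the semisimplicity of the $B_n^{(\ell)}$.

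Second, I would observe that the ambient algebras are always semisimple. Let $C_n^{(\ell)}\supseteq B_n^{(\ell)}$ be the algebra of geometric-Frobenius-equivariant endomorphisms of $H^*(A^n_{\overline k},\Ql)$. By Weil the geometric Frobenius acts semisimply on $H^1(A_{\overline k},\Ql)$, hence on all its tensor and exterior powers, hence on $H^*(A^n_{\overline k},\Ql)$; the endomorphism algebra of a semisimple module is semisimple, so $C_n^{(\ell)}$ is semisimple. The real work is therefore to find a positive density set of $\ell$ for which the subalgebra $B_n^{(\ell)}\subseteq C_n^{(\ell)}$ is again semisimple for all $n$ — the point being that a subalgebra of a semisimple algebra need not be semisimple, so one must actually control which Frobenius-invariant classes are algebraic.

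To get that control, the plan is to compare with characteristic zero. By Honda–Tate theory, after a finite extension of $k$ and an isogeny (neither of which affects either equivalence relation), $A$ is the good reduction of a CM abelian variety $\widetilde A$ over a number field; specialization of algebraic cycles then gives, for each $n$, a $\Q$-algebra homomorphism from the correspondence algebra of $\widetilde A^n$ to $B_n^{(\ell)}$, compatible — by smooth proper base change — with the identification of $\ell$-adic cohomologies and with Frobenius. In characteristic zero homological and numerical equivalence coincide on abelian varieties, so the source of this homomorphism is semisimple; hence it is enough to choose $\ell$ so that the homomorphism is \emph{surjective}, for then $B_n^{(\ell)}$ is a quotient of a semisimple algebra, for every $n$ at once. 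I would do this by taking $\ell$ in the intersection of two sets: the density-one set of Serre on which the Zariski closure of the geometric Frobenius in $\GL\bigl(H^1(A_{\overline k},\Ql)\bigr)$ is a torus of the maximal dimension permitted by the multiplicative relations among the Weil numbers of $A$; and a positive density Chebotarev set on which $\ell$ splits compatibly with the CM field of $\widetilde A$ and with $\End^0(A)$, so that, via Tate's theorem identifying $\End^0(A)\otimes\Ql$ with the commutant of $\Frob$ on $H^1(A_{\overline k},\Ql)$, the algebra $C_n^{(\ell)}$ is no larger than the characteristic-zero picture predicts. For such $\ell$ the Frobenius-fixed classes on $A^n$ should coincide with the reductions of the characteristic-zero homological classes, forcing $B_n^{(\ell)}$ to equal the image of the (semisimple) characteristic-zero correspondence algebra.

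The \textbf{main obstacle} is exactly this last step: showing that at the chosen primes $\ell$ every $\ell$-adically detected algebraic cycle on $A^n$ already descends from characteristic zero — equivalently, that no ``exotic'' mod-$p$ Tate class survives for such $\ell$. This requires matching the maximal Frobenius torus of $A$ with the reduction of the Mumford–Tate torus of $\widetilde A$ and bounding the discrepancy between them, and it is the only place where full density is lost: the splitting conditions on the CM field and on $\End^0(A)$ are what cut the good primes down to a set of positive — rather than full — density. A secondary technical point is that $A$ need not be isotypic, so the CM lift and the Chebotarev condition must be set up for $A$ as a whole, over a single compositum of number fields, in order not to lose positivity of density when the conditions coming from different isogeny factors are imposed simultaneously.
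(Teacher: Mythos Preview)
Your reduction in the first two paragraphs is sound and is essentially Jannsen's criterion: $\ell$-adic homological equivalence coincides with numerical equivalence on all $A^n$ if and only if the correspondence algebras $B_n^{(\ell)}$ are semisimple, and the ambient Frobenius-commutant $C_n^{(\ell)}$ is indeed semisimple by Weil. This is also how Clozel frames the problem. But from that point on your strategy diverges from Clozel's, and the divergence leads into open territory.

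Clozel does \emph{not} lift to characteristic zero. He works directly over $\mathbb{F}_q$, using Tate's theorem that $A$ already carries complex multiplication there: a maximal commutative $*$-stable subalgebra of $\End^0(A)$ is a product of CM fields with $*$ acting as complex conjugation (this is recalled in the paper as Theorem~\ref{classical}). The positive-density set of good primes is defined concretely (Definition~\ref{clozelprimes}): those $\ell$ admitting a place $\lambda$ of the totally real subfield $L_0$ whose completion does not contain the CM field $L$. For such $\ell$ complex conjugation acts nontrivially on the $\lambda$-adic coefficients, and Clozel uses this together with the positivity of the Rosati involution to produce a positive-definite form on cycles modulo $\ell$-homological equivalence, forcing semisimplicity. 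No comparison with a characteristic-zero model is needed.

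Your route through a CM lift $\widetilde{A}$ and Lieberman's theorem has a genuine gap at exactly the step you flag as the ``main obstacle'', and your proposed resolution does not close it. First, Serre's density-one theorem on Frobenius tori concerns varying the prime of reduction $p$ for a fixed abelian variety over a number field; here $p$ is fixed and you are varying $\ell$, and the dimension of the Frobenius torus of $A$ is determined by the multiplicative relations among the Weil numbers, hence is the \emph{same} for every $\ell$. So that condition is vacuous. Second, and more seriously, even if you arrange that the Frobenius torus of $A$ coincides with the Mumford--Tate torus of $\widetilde{A}$ (so that $C_n^{(\ell)}$ matches the space of Hodge classes on $\widetilde{A}^n$), you still need the map from \emph{algebraic} cycles on $\widetilde{A}^n$ to Hodge classes to be surjective in order to conclude that specialization hits all of $B_n^{(\ell)}$. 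That is the Hodge conjecture for powers of a CM abelian variety, which is open. Alternatively, bypassing $\widetilde{A}$ and asking directly that $B_n^{(\ell)}=C_n^{(\ell)}$ is the Tate conjecture for $A^n$, also open. Clozel's argument sidesteps both conjectures precisely because it never tries to identify the algebraic-cycle subalgebra inside the Frobenius commutant; it only needs a positivity statement, and that is what the CM structure and complex conjugation provide.
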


Combining Wildeshaus' method with this result  one can show the following.

\begin{theorem}\label{cons intro}
Suppose the base field $k$ to be finite. Let $f: X \rightarrow Y$ be a morphism in $\DM^{\ab}(k)_{\Q}$. If $R_{\ell}(f)$ is an isomorphism for almost all primes $\ell$, then $f$ itself is an isomorphism.
\end{theorem}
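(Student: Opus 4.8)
The plan is to reduce the statement to conservativity of $R_\ell$ on Chow motives of abelian type, where Clozel's theorem can be brought in, using weight structures for the reduction. First I would replace $f$ by its cone $C:=\mathrm{cone}(f)\in\DM^{\ab}(k)_\Q$: since $R_\ell$ is triangulated, $R_\ell(C)\cong\mathrm{cone}(R_\ell(f))$, which vanishes for almost all $\ell$, and $f$ is an isomorphism precisely when $C=0$. So the task becomes: if $C\in\DM^{\ab}(k)_\Q$ satisfies $R_\ell(C)=0$ for almost all $\ell$, then $C=0$. I would invoke Bondarko's weight structure on $\DMgm(k)_\Q$, whose heart is $\CHM(k)_\Q$: because $\CHM^{\ab}(k)_\Q$ is a negative subcategory of $\DMgm(k)_\Q$ that generates $\DM^{\ab}(k)_\Q$ as a thick triangulated subcategory, the weight structure restricts to a bounded one on $\DM^{\ab}(k)_\Q$ with heart $\CHM^{\ab}(k)_\Q$. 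On the target side I would factor $R_\ell$ through $D^b_c(\Spec k,\Q_\ell)$ — remembering the Frobenius action, which changes nothing, since a Frobenius-equivariant morphism is invertible iff its underlying $\Q_\ell$-linear morphism is — equipped with the weight structure coming from Deligne's weights. The Weil conjectures say exactly that $H^i_\ell$ of a Chow motive over a finite field is pure of weight $i$; hence $R_\ell$ carries the heart into the heart, and is weight-exact.

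Next I would run the standard weight dévissage. Arguing by induction on the length of the weight interval $[a,b]$ of $C$, suppose $b>a$ and consider the weight truncation triangle $w_{\le b-1}C\to C\to w_{\ge b}C\to w_{\le b-1}C[1]$, in which $w_{\ge b}C$ is pure of weight $b$, hence a shift of some $M\in\CHM^{\ab}(k)_\Q$. Applying $R_\ell$ and using $R_\ell(C)=0$ produces an isomorphism $R_\ell(w_{\ge b}C)\cong R_\ell(w_{\le b-1}C)[1]$; but weight-exactness makes the left-hand side pure of a single Frobenius weight, while each cohomology sheaf of the right-hand side has its weights confined to a range not containing that weight (one unit of slack from the shift, plus the weight spectral sequence for the truncation) — so both sides must vanish. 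Conservativity of $R_\ell$ on the heart (next paragraph) then forces $w_{\ge b}C=0$, whence $C\cong w_{\le b-1}C$, which has strictly shorter weight interval and still realizes to $0$; the induction closes, the base case $b=a$ being conservativity on the heart. This is the same reduction Wildeshaus carries out in characteristic zero \cite{WildPic}.

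It then remains to prove that $R_\ell$ is conservative on $\CHM^{\ab}(k)_\Q$, the only point where one really uses ``almost all $\ell$'' rather than ``all $\ell$''. Let $g$ be a morphism of $\CHM^{\ab}(k)_\Q$ with $R_\ell(g)$ an isomorphism for almost all $\ell$. Its source and target are direct summands of sums of Tate twists of motives of abelian varieties, hence are cut out of powers of a single abelian variety $A/\mathbb{F}_q$; since the Künneth projectors of abelian varieties are algebraic, homological motives are defined, and by Clozel's theorem — applied to a sufficiently large power of $A$ — there is a positive-density, in particular infinite, set of primes $\ell$ along which $\ell$-adic homological and numerical equivalence agree on all powers of $A$ relevant to $g$. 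Choosing such an $\ell$ that also lies among the ``almost all'' of the hypothesis, $R_\ell$ on the pertinent subcategory factors through $\NUM^{\ab}(k)_\Q$, which is semisimple abelian by Jannsen's theorem, via a faithful additive — hence exact — functor; a faithful exact functor out of a semisimple abelian category is conservative, so the image $\bar g$ of $g$ in $\NUM^{\ab}(k)_\Q$ is an isomorphism. Finally, motives of abelian type are finite-dimensional in the sense of Kimura and O'Sullivan (by Kimura's theorem, building on Shermenev), so numerically trivial endomorphisms are nilpotent; lifting $\bar g^{-1}$ along the full functor $\CHM^{\ab}(k)_\Q\to\NUM^{\ab}(k)_\Q$ gives a left and a right inverse of $g$ up to nilpotent perturbation, so $g$ is an isomorphism.

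I expect the main obstacle to be the weight bookkeeping in the second step: one needs the realization to be weight-exact for a genuine weight structure, which is what forces the passage to $\ell$-adic sheaves with Frobenius and to Deligne's weights — the bare target $D^b(\Q_\ell)$ is too coarse, because $R_\ell$ of a Chow motive is spread over several cohomological degrees at once. Once Bondarko's formalism and its compatibility with $R_\ell$ are granted, the remaining ingredients (Clozel's theorem, Jannsen's semisimplicity of numerical motives, and Kimura finite-dimensionality of motives of abelian type) fit together routinely.
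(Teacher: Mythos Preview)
Your proposal is correct and takes essentially the same route as the paper: reduce via Bondarko's weight structure on $\DM^{\ab}(k)_{\Q}$ to conservativity on the heart $\CHM^{\ab}(k)_{\Q}$, and handle the heart by choosing a Clozel prime (among the ``almost all'') on which homological and numerical equivalence agree for the relevant abelian variety, then invoking Jannsen semisimplicity and Kimura finiteness to lift the inverse. The paper is terser---it first fixes a product abelian variety $A$ generating $X$ and $Y$ and a single Clozel prime for $A$, then cites Wildeshaus \cite[1.10--1.12]{WildPic} wholesale for the weight d\'evissage you spell out; the one imprecision in your write-up is that $R_\ell$ of a Chow motive is not literally ``pure of a single Frobenius weight'' (it can have cohomology in several degrees), but it does lie in a single degree of the target weight structure, and that orthogonality is exactly what your d\'evissage needs.
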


Although this result is probably enough for applications over finite fields, it is intellectually unsatisfactory: for instance we cannot deduce, even for a single prime $\ell$, that the functor $R_{\ell}$ is conservative. To go further we need to restrict to Chow motives. 
\begin{theorem}\label{cons intro pure}
Let $k$ be a finite field. For any prime $\ell$ invertible in  $k$ the $\ell$-adic realization functor is conservative when restricted to $\CHM^{\ab}(k)_{\Q}$.
\end{theorem}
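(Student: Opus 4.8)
The plan is to reduce conservativity over $\mathbb{F}_q$ to a statement about linear algebra data attached to motives, using the structural theory of Chow motives of abelian type together with Clozel's theorem. First I would recall that by definition $\CHM^{\ab}(k)_{\Q}$ is generated by motives of abelian varieties under direct summands, tensor products, duals and Tate twists; by work of Künnemann and O'Sullivan (building on Kimura finiteness) the motive of an abelian variety has a canonical Chow–Künneth decomposition, every object of $\CHM^{\ab}(k)_{\Q}$ is a direct sum of twists of summands of $\mathfrak{h}^1$-powers, and the category is \emph{Kimura finite} (every object is a sum of an even and an odd object). A morphism $f\colon X\to Y$ in this category is an isomorphism if and only if it is invertible, and since the category is pseudo-abelian this is equivalent to $f$ being both a split mono and a split epi; standard diagram-chasing reduces us to: a morphism $f$ with $R_\ell(f)$ an isomorphism is itself invertible. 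Replacing $f$ by the difference with a putative inverse — more precisely, using that $R_\ell(f)$ iso implies the cone in the ambient triangulated category has zero realization — the real content is that a Chow motive $M\in\CHM^{\ab}(k)_{\Q}$ with $R_\ell(M)=0$ is itself $0$.

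So the key step I would carry out is: for $M\in\CHM^{\ab}(k)_{\Q}$, if $R_\ell(M)=0$ then $M=0$. Here is where Clozel's theorem enters, but only indirectly, and this is the subtle point: we are working with a \emph{fixed} prime $\ell$, whereas Clozel only gives us coincidence of numerical and $\ell'$-adic homological equivalence for $\ell'$ in a set of positive density. The trick — this is Wildeshaus' idea adapted to the finite-field setting — is that the identity endomorphism $\mathrm{id}_M\in\End(M)$, which is a $\Q$-linear datum \emph{independent of $\ell$}, can be detected numerically. Concretely: $M=0$ iff $\mathrm{id}_M=0$ in $\End(M)$; the ideal of numerically trivial endomorphisms is a nilpotent (in fact radical) ideal in the finite-dimensional semisimple-mod-radical algebra $\End(M)$, so $\mathrm{id}_M$ is numerically trivial iff $\mathrm{id}_M=0$. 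Now if $R_\ell(M)=0$ then $\mathrm{id}_M$ is $\ell$-adically homologically trivial; but I want it numerically trivial. Choose, by Clozel's theorem, \emph{any} prime $\ell'$ (of which there are infinitely many) for which numerical and $\ell'$-adic homological equivalence agree on the relevant abelian varieties. It then suffices to know $R_{\ell'}(M)=0$. The bridge between $R_\ell(M)=0$ and $R_{\ell'}(M)=0$ is that the class of $M$ — or rather the relevant rank/dimension data — is governed by characteristic-zero-independent quantities: the ranks of the Chow–Künneth projectors acting on cohomology are given by Betti numbers of abelian varieties, which agree for all $\ell$, and more robustly, $M$ being a summand of a motive of an abelian variety cut out by an idempotent $e$, one has $R_\ell(M)=0 \iff e$ acts as $0$ on $H^*_{\ell}(A) \iff e$ is $\ell$-adically homologically trivial $\iff$ (by the compatibility of cycle class maps and the fact that $e$ comes from an algebraic cycle) $e$ is numerically trivial, using that \emph{for the one prime $\ell$ we are given} the cycle $e$ being $\ell$-homologically trivial already forces, via the action on a single cohomology theory, numerical triviality — because numerically nontrivial idempotent cycles act nontrivially on \emph{every} Weil cohomology by semisimplicity of numerical motives (Jannsen).

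Let me reststructure the crucial implication cleanly, as that is where I expect the main obstacle. The right statement is: if $e\in\End(M)$ is an idempotent and $R_\ell(e)=0$ then $e$ is numerically equivalent to $0$, hence $e=0$ since $e$ is idempotent and the numerical ideal is nilpotent. The proof of this: $e$ being homologically trivial for one $\ell$ does not a priori give numerical triviality in positive characteristic — that is exactly the open problem — \emph{but} Jannsen's theorem says the category of numerical motives is semisimple abelian, and by Clozel there is a prime $\ell'$ where homological-$\ell'$ equals numerical; now observe that $\mathrm{tr}(e \mid H^*_\ell(M)) = \mathrm{tr}(e\mid H^*_{\ell'}(M))$ because both equal the intersection number $\langle \Delta, \Gamma_e\rangle$ computed on the abelian variety (the trace of a correspondence on $\ell$-adic cohomology equals its degree as a $0$-cycle, independently of $\ell$, by the Lefschetz trace formula for correspondences / compatibility of cycle maps with trace). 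Hence $R_\ell(e)=0$ forces all powers' traces $\mathrm{tr}(e\mid H^*_{\ell'}(M)) = 0$ too (since $e$ idempotent, $\mathrm{tr}(e)=\dim$ of image $=0$ forces the image in $H^*_{\ell'}$ to vanish as it is a nonnegative integer), so $R_{\ell'}(e)=0$, so $e$ is $\ell'$-homologically trivial, so by choice of $\ell'$ it is numerically trivial, so $e=0$. Feeding this back: any $M\in\CHM^{\ab}(k)_{\Q}$ is $\mathrm{Im}(e)$ for such an idempotent correspondence on a power of an abelian variety (after Tate twist), so $R_\ell(M)=0\implies e$ numerically trivial $\implies e=0\implies M=0$; and then the reduction of the opening paragraph upgrades this to full conservativity. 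The main obstacle is the bookkeeping making precise that (a) every object of $\CHM^{\ab}(k)_{\Q}$ is cut out by an \emph{algebraic} idempotent on an abelian-variety motive so that the Lefschetz trace / degree argument applies, and (b) the trace identity $\mathrm{tr}(e\mid H^*_\ell) = \mathrm{tr}(e\mid H^*_{\ell'})$ genuinely holds $\ell$-independently for these correspondences — this is classical for abelian varieties but needs to be threaded through the tensor/dual/twist operations defining the category.
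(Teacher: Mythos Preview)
There is a genuine gap in your reduction step. You claim that conservativity of $R_\ell$ on $\CHM^{\ab}(k)_\Q$ reduces to the statement ``$R_\ell(M)=0 \Rightarrow M=0$'' by passing to the cone of $f$ in the ambient triangulated category. But the cone of a morphism between Chow motives is in general \emph{not} a Chow motive --- it lives only in $\DM^{\ab}(k)_\Q$. So to run your argument you would need phantom vanishing in $\DM^{\ab}(k)_\Q$, which is exactly conservativity of $R_\ell$ on $\DM^{\ab}(k)_\Q$ for the fixed prime $\ell$; that is strictly stronger than the statement you are trying to prove, and is open (the paper's Theorem~\ref{cons intro2} shows it would imply that $\ell$-adic homological and numerical equivalence coincide on abelian varieties). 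Meanwhile phantom vanishing \emph{inside} $\CHM^{\ab}(k)_\Q$ is already known for any single Weil cohomology as a direct consequence of Kimura finiteness (a homologically trivial endomorphism of a Kimura-finite motive is nilpotent, so $R_\ell(\id_M)=0$ forces $\id_M=0$); your Clozel/trace argument is essentially a longer re-derivation of this known fact and does not buy you conservativity.

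The paper's proof takes a different route and never passes through cones. Using Kimura finiteness it reduces conservativity (Propositions~\ref{preliminar1}--\ref{preliminar3}) to the following autoduality statement: every pure $X\in\CHM^{\ab}(k)_F$ of even weight $n$ and dimension one satisfies $X\cong X^\vee(-n)$. The point is that such an autoduality lets one manufacture, for any $f:X\to Y$ with invertible realization, a map $g:Y\to X$ with invertible realization, after which the \emph{endomorphism} case of conservativity (Theorem~\ref{kimura}(\ref{endomorphism})) applies to $g\circ f$. The autoduality itself (Theorem~\ref{main}) is where the finite-field hypothesis and Clozel genuinely enter: Tate's theorem provides enough CM to decompose $\mathfrak{h}^1(A)^{\otimes n}$, over a suitable totally real $F$, into two-dimensional pieces each carrying a perfect pairing coming from the polarization; one then checks, by realizing at a Clozel place $\lambda$ of $F$, that the line $X$ cannot be one of the two isotropic lines (those are swapped by complex conjugation, while $R_\lambda(X)$ is fixed). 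This entire mechanism is absent from your proposal.
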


It is fun to notice how conservativity and the equality between homological and numerical equivalence are related also "in the other direction". For instance we show the following.

\begin{theorem}\label{cons intro2}
Let $k$ be a finite field and $\ell$ be a prime number invertible in  $k$. 
Suppose that, for all totally real number fields $F$ and all places $\lambda$ of $F$ above $\ell$, the $\lambda$-adic realization functor is conservative when restricted to $\DM^{\ab}(k)_{F}$. Then the $\ell$-adic homological equivalence  coincides with numerical equivalence on abelian varieties over $k$.
\end{theorem}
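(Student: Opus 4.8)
We argue by contraposition: supposing that $\ell$-adic homological equivalence does not coincide with numerical equivalence on abelian varieties over $k$, we will produce a totally real number field $F$, a place $\lambda$ of $F$ above $\ell$, and a \emph{nonzero} object of $\DM^{\ab}(k)_F$ that is killed by $R_\lambda$, contradicting the hypothesis. By Jannsen's theorem the numerical motives of abelian type over $k$ form a semisimple abelian category, and by Kimura--O'Sullivan finite-dimensionality the ideal $\mathcal N(M):=\Ker\bigl(\End_{R_\ell}(M)\to\End_{\num}(M)\bigr)$ is the (nilpotent) radical of $\End_{R_\ell}(M)$ for every $M\in\CHM^{\ab}(k)_{\Q}$. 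Our assumption means precisely that $\mathcal N(M)\neq 0$ for some such $M$; equivalently, some indecomposable Chow motive $N$ of abelian type has $\End_{R_\ell}(N)$ a non-division local ring. We may then choose a Chow correspondence $u\in\End_{\CHM^{\ab}(k)_{\Q}}(N)$, numerically trivial, with $R_\ell(u)\neq 0$ and $R_\ell(u)^2=0$; set $\bar u:=R_\ell(u)$, a nonzero square-zero Frobenius-equivariant operator on $V:=R_\ell(N)$, and note that $W:=\im\bar u\subseteq\Ker\bar u$ is a nonzero proper Frobenius-submodule of $V$.

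The desired object cannot be found inside $\CHM^{\ab}(k)_F$: by Theorem~\ref{cons intro pure} a morphism between Chow motives of abelian type which becomes invertible after $R_\ell$ is already invertible, so the triangulated structure is genuinely needed, and the plan is to exhibit a \emph{nontrivial weight complex with acyclic realization}. Precisely, we aim to build a bounded complex
\[
G^{\bullet}\ =\ \bigl[\,N'\ \xrightarrow{\ \alpha\ }\ N\ \xrightarrow{\ u\ }\ N\ \xrightarrow{\ \beta\ }\ N''\,\bigr]
\]
in $\CHM^{\ab}(k)_F$ for a suitable totally real $F$, where $N',N''$ are Chow motives of abelian type with $R_\lambda(N')=\Ker\bar u$ and $R_\lambda(N'')=V/\im\bar u$, and $\alpha,\beta$ realize the inclusion of $\Ker\bar u$ and the projection onto $V/\im\bar u$, in such a way that $R_\lambda(G^{\bullet})$ is an exact complex. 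Granting this, one sets $E:=\mathrm{Tot}(G^{\bullet})\in\DM^{\ab}(k)_F$. On one hand $R_\lambda(E)=0$, because realizations of motives of abelian type have semisimple Frobenius and an acyclic complex of such is contractible. On the other hand $E\neq 0$: the weight-complex functor $\DM^{\ab}(k)_F\to K^{b}(\CHM^{\ab}(k)_F)$, and its composite with the numerical projection, are conservative, while the numerical complex $\pi(G^{\bullet})$ has vanishing middle differential (as $u$ is numerically trivial) and therefore cannot be exact — its exactness would force $\pi(\alpha)$ to be an epimorphism, impossible for rank reasons since $\Ker\bar u$ is a proper submodule of $V$. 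Thus $E$ contradicts conservativity of $R_\lambda$ on $\DM^{\ab}(k)_F$.

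The substantial point — and the place where the hypotheses on $F$ and on $\lambda$ enter in an essential way — is the realization of the Frobenius-stable subquotients $\Ker\bar u$ and $V/\im\bar u$ by honest Chow motives of abelian type, together with morphisms $\alpha,\beta$ realizing the corresponding inclusion and projection: a priori the relevant Frobenius-equivariant projectors are not known to be algebraic (a Tate-conjecture-type issue), and at an $\ell$-adic place one cannot invoke positivity of the pairing directly. The idea is to use Honda--Tate to realize the eigenvalue data of $\Ker\bar u$ and $V/\im\bar u$ by the cohomology of auxiliary abelian varieties — where the relevant summands are cut out by polynomials in Frobenius, hence algebraic — and to use the polarization on $N$, which is \emph{totally positive} once the coefficients are taken in a suitable totally real $F$, to manufacture the compatible morphisms $\alpha,\beta$ and to arrange that the composites $u\alpha$ and $\beta u$ vanish; the Frobenius-semisimplicity of the realizations (used also to lift the homological idempotents at play to Chow idempotents) is what makes this bookkeeping work. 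Once $N'$, $N''$, $\alpha$ and $\beta$ are in hand, the remaining verifications are formal.
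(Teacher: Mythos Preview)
Your overall strategy --- produce a nonzero object of $\DM^{\ab}(k)_F$ with vanishing $\lambda$-adic realization --- is sound, and the endgame (weight complex plus nilpotence of the numerically trivial ideal) would work. The gap is exactly where you locate the ``substantial point'': the construction of $N',N'',\alpha,\beta$. For $G^\bullet$ to be a complex you need $u\alpha=0$ and $\beta u=0$ \emph{in the Chow category}, not merely after realization; and for $R_\lambda(G^\bullet)$ to be exact you need $R_\lambda(\alpha),R_\lambda(\beta)$ to be precisely the inclusion of $\Ker\bar u$ and the projection to $V/\im\bar u$. Honda--Tate lets you build some $N'$ whose realization is abstractly isomorphic to $\Ker\bar u$, but producing a Chow correspondence $\alpha:N'\to N$ realizing a \emph{prescribed} Frobenius-equivariant map is a Tate-conjecture statement for higher-codimension cycles on products of abelian varieties, which is not known. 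The appeal to ``total positivity'' of the polarization does not supply this: positivity is a statement about a pairing, not about surjectivity of the cycle-class map onto Frobenius-invariants. So as written the proof does not go through.

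The paper avoids this obstacle by never trying to realize an arbitrary Frobenius-submodule. Instead it uses the CM decomposition of $\mathfrak{h}^1(A)$ over the maximal totally real subfield $F\subset L$ to reduce to a map $f:X\to\mathbb{L}^{\otimes i}$ with $X$ of dimension \emph{two} and target of dimension one, $f$ numerically trivial but $R_\lambda(f)\neq 0$. The polarization gives an explicit $g:\mathbb{L}^{\otimes i}\to X$ with $R_\lambda(g)\neq 0$, and numerical triviality of $f$ forces $g\circ f=0$, so $g$ factors through the cone $C$ of $f$ via some $h:\mathbb{L}^{\otimes i}\to C$. Now $R_\lambda(C)$ is one-dimensional, so $R_\lambda(h)$ is a nonzero map between one-dimensional spaces, hence an isomorphism; the assumed conservativity on $\DM^{\ab}(k)_F$ makes $h$ an isomorphism, so $C\cong\mathbb{L}^{\otimes i}$ is a Chow motive, the triangle splits, and $X\cong\mathbb{L}^{\otimes i}\oplus\mathbb{L}^{\otimes i}$, contradicting the choice of $f$. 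The point is that the only ``lifting'' needed is the polarization isomorphism on $\mathfrak{h}^1(A)$, which is already available at the Chow level.
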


There are two tools in the proofs of these results. The first, valid over any field, is Kimura finiteness, which is a first approximation to conservativity. The other one is the classical fact, due to Tate, that abelian varieties over finite fields have sufficiently many complex multiplications. This allows to decompose their motives in very small direct factors. 

\subsection*{Organization of the paper} 
Section \S \ref{reminder} recalls results on motives of abelian type such as Kimura finiteness. In Section \S \ref{autoduality} we deduce the  main technical result (Proposition \ref{preliminar3}), inspired by Hodge Theory, which is valid over any field.
Section \S \ref{finitefields} recalls the theorem of Tate on endomorphisms of abelian varieties over finite fields and the results from \cite{clozel}. In Section  \S \ref{sectionhomnum} we will combine their results with Proposition \ref{preliminar3} and deduce Theorem \ref{cons intro pure}. Theorems \ref{cons intro} and  \ref{cons intro2} are explained in Section \ref{sectionmixed}.

\subsection*{Acknowledgments} I would like to thank Olivier Benoist, Fran\c{c}ois Charles, Fr\'ed\'eric D\'eglise,  Javier Fres\'an,  Peter Jossen, Marco Maculan and Charles Vial for useful comments.

\section{The motive of an abelian variety}\label{reminder}
We recall in this section classical results on motives of abelian type.
Let  $k$ be a base field, $F$ be a field of coefficients of characteristic zero and $\CHM(k)_{F}$ be the category of Chow motives over $k$ with coefficients in $F$ (for generalities, we refer to \cite{Andmot}).

\begin{theorem}\label{classic} Let $A$ be an abelian variety  of dimension $g$, $\End(A)$ its ring of endomorphisms (as an abelian variety) and $M(A)\in \CHM(k)_{F}$ its motive. Then the following holds.
\begin{enumerate}
\item\label{kunneth}  \cite{DeMu}  The motive $M(A)$ admits a K\"unneth decomposition 
\[M(A)=\bigoplus_{i=0}^{2g} \mathfrak{h}^i(A)\] natural in $\End(A)$. Moreover  $ \mathfrak{h}^0(A)$ is the unit object $\one$.
%Moreover the endomorphism $n_A: A \rightarrow A,$ of multiplication by $n$, acts on $\mathfrak{h}^i(A)$ as $n^i \cdot \id$.
\item\label{kunn} \cite{Ku1} There is a canonical isomorphism \[\mathfrak{h}^i(A)=\Sym^i \mathfrak{h}^1(A).\]
\item\label{kings}   \cite[Proposition 2.2.1]{Ki}  The action of $\End(A)$ on $\mathfrak{h}^1(A)$ (coming from naturality in (\ref{kunneth}))  induces an isomorphism of algebras \[\End(A)\otimes_{\Z}F= \End_{\CHM^{\ab}(k)_{F}}(\mathfrak{h}^1(A))\]
and if $A$ is isogenous to $B\times C$, then $\mathfrak{h}^1(A) =  \mathfrak{h}^1(B) \oplus \mathfrak{h}^1(C) .$
\item\label{polarization}   \cite{Ku2} The classical isomorphism in $\ell$-adic cohomology  induced by a polarization $H_{\ell}^1(A)\cong H_{\ell}^1(A)^{\vee}(-1)$ lifts into an isomorphism
 \[\mathfrak{h}^1(A)\cong\mathfrak{h}^1(A)^{\vee}(-1).\] 
 \item\label{lefschetz}   \cite{Ku2} The Lefschetz decomposition of the $\ell$-adic cohomology $H_\ell^{2i}  (A)$ induced by a polarization, lifts into a decomposition of the motive $\mathfrak{h}^{2i}(A)$. 
\end{enumerate}
\begin{corollary}\label{ultimo} We keep notations from the theorem above. The following holds.
\begin{enumerate}
\item\label{lefsche} The motive  $\one(-1)$ is a direct factor of 
 $\mathfrak{h}^1(A)\otimes\mathfrak{h}^1(A).$ 

 \item\label{boh} A map $f:\mathfrak{h}^1(A)\rightarrow\mathfrak{h}^1(A)^{\vee}(-1)$ whose $\ell$-adic realization is zero must be zero.

\end{enumerate} 
\end{corollary}
\begin{proof}
Using the Lefschetz decomposition of Theorem \ref{classic}(\ref{lefschetz}) we have that $\one(-1)$ is a direct factor of $\mathfrak{h}^2(A)$ (recall that, by Theorem \ref{classic}(\ref{kunneth}) , we have $ \mathfrak{h}^0(A)=\one$). On the other hand $\mathfrak{h}^2(A)$ is a direct factor of 
 $\mathfrak{h}^1(A)\otimes\mathfrak{h}^1(A)$ by Theorem \ref{classic}(\ref{kunn}), this shows (\ref{lefsche}).
 
 To show  (\ref{boh}), we compose $f$  with an isomorphism $\mathfrak{h}^1(A)\cong\mathfrak{h}^1(A)^{\vee}(-1)$ of Theorem \ref{classic}(\ref{polarization}). This reduces to show that the realization is injective on 
 $\End_{\CHM^{\ab}(k)_{F}}(\mathfrak{h}^1(A))$, which is clear by Theorem \ref{classic}(\ref{kings}).
 \end{proof}
\end{theorem}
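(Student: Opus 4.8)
The plan is to derive both statements formally from Theorem~\ref{classic}, using beyond it only the classical fact that $\End(A)\otimes_{\Z}\Q$ acts faithfully on the rational $\ell$-adic Tate module of $A$.

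For part~(\ref{lefsche}) I would first realize $\one(-1)$ inside $\mathfrak{h}^{2}(A)$ and then inside $\mathfrak{h}^{1}(A)\otimes\mathfrak{h}^{1}(A)$. In $\ell$-adic cohomology, the Lefschetz operator attached to a polarization embeds $\Ql(-1)$ (the span of the polarization class, i.e.\ the image of $H^{0}_{\ell}(A)$) as a direct summand of $H^{2}_{\ell}(A)$, appearing as the top piece of the primitive decomposition. By Theorem~\ref{classic}(\ref{lefschetz}) this decomposition lifts to motives, so that $\mathfrak{h}^{2}(A)=\one(-1)\oplus\mathfrak{h}^{2}_{\prim}(A)$, where I use $\mathfrak{h}^{0}(A)=\one$ from Theorem~\ref{classic}(\ref{kunneth}). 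On the other hand Theorem~\ref{classic}(\ref{kunn}) gives $\mathfrak{h}^{2}(A)=\Sym^{2}\mathfrak{h}^{1}(A)$, and since $F$ has characteristic zero and $\CHM^{\ab}(k)_{F}$ is pseudo-abelian, the idempotent $\tfrac12(\id+\tau)$ (with $\tau$ the swap of the two factors) exhibits $\Sym^{2}\mathfrak{h}^{1}(A)$ as a direct factor of $\mathfrak{h}^{1}(A)\otimes\mathfrak{h}^{1}(A)$. Composing the two inclusions gives the claim (note $\mathfrak{h}^{1}(A)\neq 0$ as $\dim A\geq 1$).

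For part~(\ref{boh}) the plan is to reduce to an injectivity statement on an endomorphism algebra. I would compose $f$ with the inverse of the polarization isomorphism $\phi\colon\mathfrak{h}^{1}(A)\isocan\mathfrak{h}^{1}(A)^{\vee}(-1)$ of Theorem~\ref{classic}(\ref{polarization}) to obtain $g:=\phi^{-1}\circ f\in\End_{\CHM^{\ab}(k)_{F}}(\mathfrak{h}^{1}(A))$, whose $\ell$-adic realization still vanishes; it then suffices to show that $R_{\ell}$ is injective on this endomorphism algebra. By Theorem~\ref{classic}(\ref{kings}) the algebra is $\End(A)\otimes_{\Z}F$, acting under $R_{\ell}$ on the realization of $\mathfrak{h}^{1}(A)$ through the natural action of $\End(A)$ on the Tate module, extended to $F$-coefficients. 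Since $\End(A)\otimes_{\Z}\Q\hookrightarrow\End_{\Ql}(H^{1}_{\ell}(A))$ by faithfulness of the rational Tate module, and since $F$ is flat over $\Q$, this extended action stays injective; hence $g=0$, and therefore $f=0$.

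I do not anticipate a genuine obstacle here --- both parts are bookkeeping over Theorem~\ref{classic}. The only point deserving care is, in part~(\ref{boh}), checking that the algebra isomorphism of Theorem~\ref{classic}(\ref{kings}) is compatible with $R_{\ell}$, so that the realization of $\End_{\CHM^{\ab}(k)_{F}}(\mathfrak{h}^{1}(A))$ is indeed the $F$-linearized Tate-module action; after that, faithfulness closes the argument. A similarly minor point in part~(\ref{lefsche}) is that the motivic lift furnished by Theorem~\ref{classic}(\ref{lefschetz}) carries the cohomological line to $\one(-1)$ and not to some other Tate twist, which is immediate from the definition of the Lefschetz operator.
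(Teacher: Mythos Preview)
Your proposal is correct and follows essentially the same route as the paper: for (\ref{lefsche}) you use Theorem~\ref{classic}(\ref{lefschetz}) to get $\one(-1)$ inside $\mathfrak{h}^2(A)$ and then Theorem~\ref{classic}(\ref{kunn}) to embed $\mathfrak{h}^2(A)$ in $\mathfrak{h}^1(A)^{\otimes 2}$, and for (\ref{boh}) you compose with the polarization isomorphism and invoke Theorem~\ref{classic}(\ref{kings}). The extra care you take (the explicit symmetrization idempotent, the faithfulness of the Tate module plus flatness of $F$ over $\Q$, and the compatibility check) only makes explicit what the paper leaves implicit.
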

\begin{definition}\label{pure abelian}
Define $\CHM^{\ab}(k)_{F}$ to be the smallest rigid and pseudo-abelian full subcategory of  $\CHM(k)_{F}$ containing motives of abelian varieties. A motive in $\CHM^{\ab}(k)_{F}$  is called "of abelian type".

A motive $X$ of abelian type is pure, if there is a realization functor $R$ such that the cohomology groups of $R(M)$ are all zero except in one degree. In this case the degree will be called the weight of $X$. Moreover such an $X$ is said to be of dimension $d$, if the only non zero cohomology group of  $R(M)$ is of dimension $d$. In this case we define $\det X$ as $\wedge^d X$ if the weight is even and as  $\Sym^d X$ if the weight is odd. Similarly we define $\det f$ for a morphism $f: X \rightarrow Y$ between pure motives of same degree and dimension.
\end{definition}
\begin{remark}
The notions above do not actually depend on the choice of the realization functor $R$ \cite[Lemme B.1.4]{AK}. Note also that an odd object $X$ of dimension $d$ in our sense, is of dimension $-d$ in Kimura's sense.\end{remark}

\begin{theorem}\label{kimura} Let $X$ be a motive of abelian type and $R$ be a realization functor with respect to a fixed Weil cohomology. Then the following holds.
\begin{enumerate}
\item\label{phantom} \cite[Corollary 7.3]{kim} If $R(X)$ is zero then $X$ itself is zero.
\item\label{invertible} \cite[Corollaire 3.19]{AndBour} If $X$ is of dimension one, then 
$X \otimes X^{\vee}\cong \one.$
\item\label{small} \cite[Corollary 3.7]{jannsen} If $X$ is of dimension one, then 
\[\End_{\CHM^{\ab}(k)_{F}}(X) =F \cdot \id.\]
\item\label{determinant}  \cite[Lemma 3.2]{OS}  If $R(X)$ is concentrated in even degree (respectively odd), and of total dimension $d$, then $X^\vee= \wedge^{d-1} X  \otimes (\det X )^\vee$ \newline (respectively  $X^\vee= \Sym^{d-1} X  \otimes (\det X )^\vee$).
\item\label{projectors} \cite[Corollary 7.8]{kim} Any decomposition of $X$ as homological motive (with respect to $R$), or as numerical motive, lifts to a decomposition of $X$ in $\CHM^{\ab}(k)_{F}$.
\item\label{endomorphism}  \cite[Corollary 7.9]{kim}  Let $f:X \rightarrow X$ be an endomorphism. If $R(f)$ is an isomorphism then $f$ is an isomorphism too.
\item\label{lifting} \cite[Corollaire 3.16]{AndBour} Let $Y$ be another motive of abelian type. If $X$ and $Y$ are isomorphic as  homological motives (or numerical motives) then they are isomorphic in $\CHM^{\ab}(k)_{F}$.
\end{enumerate}
\end{theorem}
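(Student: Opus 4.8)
The plan is to record that all seven items are known and to indicate the short deduction of each from two structural facts about motives of abelian type. The first is that every object of $\CHM^{\ab}(k)_F$ is \emph{Kimura finite}: the generators $M(A)=\bigoplus_{i=0}^{2g}\Sym^i\mathfrak h^1(A)$ (Theorem~\ref{classic}(\ref{kunneth}),(\ref{kunn})) are built from the oddly finite-dimensional motive $\mathfrak h^1(A)$ --- the motivating example of \cite{kim} --- and Kimura finiteness is stable under the operations (direct sum, tensor product, dual, direct summand) used to generate $\CHM^{\ab}(k)_F$. The second is that on a Kimura-finite object the two-sided ideal $\mathcal N$ of numerically trivial morphisms --- which contains the morphisms killed by any fixed Weil cohomology --- is \emph{nilpotent}; this is the theorem of Kimura and O'Sullivan, \cite{kim},\cite{OS}. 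Granting these two facts, the rest is bookkeeping in $F$-linear rigid tensor categories.

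Four of the items come straight from nilpotence of $\mathcal N$. For (\ref{phantom}): if $R(X)=0$ then $\id_X$ is homologically trivial, hence lies in a nil ideal of $\End(X)$, hence is nilpotent, so $\id_X=0$ and $X=0$. For (\ref{endomorphism}): if $R(f)$ is invertible, pick $g$ with $R(g)=R(f)^{-1}$; then $fg-\id_X$ and $gf-\id_X$ are homologically trivial, hence nilpotent, so $fg$ and $gf$ are units in $\End(X)$ and $f$ is an isomorphism. For (\ref{projectors}): a homological or numerical idempotent on $X$ is an idempotent in the quotient of $\End(X)$ by a nil ideal, and since $F\supseteq\Q$ it lifts to an honest idempotent of $\End(X)$, whose image realizes the prescribed decomposition. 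For (\ref{lifting}): I would simply cite \cite[Corollaire 3.16]{AndBour}; its content is that the quotient of $\CHM^{\ab}(k)_F$ by $\mathcal N$, although far from full onto homological motives, nonetheless reflects isomorphism classes of Kimura-finite objects, again by nilpotence of $\mathcal N$ together with pseudo-abelianness.

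The remaining three items concern an object $X$ of dimension one and rest on the fact that such an $X$ is \emph{invertible}. Indeed, if $R(X)$ is concentrated in even degree then $\wedge^2 X$ has zero realization, so $\wedge^2 X=0$ by (\ref{phantom}), and an object with $\wedge^2 X=0$ and one-dimensional realization in a rigid tensor category is invertible ($X\tensor X^\vee\isom\one$); if $R(X)$ sits in odd degree one argues identically with $\Sym^2 X=0$. This is (\ref{invertible}), \cite[Corollaire 3.19]{AndBour}. Now (\ref{small}) follows formally: $\End(X)=\Hom(X,X)\isom\Hom(\one,X^\vee\tensor X)\isom\Hom(\one,\one)=F\cdot\id$, which is \cite[Corollary 3.7]{jannsen}. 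Finally (\ref{determinant}) is the general fact in an $F$-linear rigid tensor category that, for a dualizable object of even (resp.\ odd) total dimension $d$, the object $\det X$ is invertible and the wedge (resp.\ symmetric) pairing $X\tensor\wedge^{d-1}X\ra\det X$ (resp.\ $X\tensor\Sym^{d-1}X\ra\det X$) is perfect, whence $X^\vee\isom\wedge^{d-1}X\tensor(\det X)^\vee$ (resp.\ with $\Sym$ in place of $\wedge$); this is \cite[Lemma 3.2]{OS}.

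The only genuinely non-formal ingredient, and the step I would flag as the true obstacle, is the nilpotence of $\mathcal N$ on Kimura-finite motives: this is precisely where the hypothesis "of abelian type" is used in an essential way, and it is a theorem about Schur functors of Chow motives rather than something that can be extracted by abstract category theory. For the purposes of Theorem~\ref{kimura} it is invoked from \cite{kim} (and \cite{OS}); once it is in hand, every other assertion in the list is a routine unwinding of definitions in rigid tensor categories.
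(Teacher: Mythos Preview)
The paper gives no proof of Theorem~\ref{kimura}: each of the seven items is stated with a citation to the literature and the theorem is used as a black box thereafter. Your proposal therefore goes beyond what the paper supplies. The two structural inputs you isolate --- Kimura finiteness of every object of $\CHM^{\ab}(k)_F$, and the Kimura--O'Sullivan nilpotence of the ideal $\mathcal N$ on such objects --- are exactly the right ones, and your deductions of (\ref{phantom}), (\ref{projectors}), (\ref{lifting}), (\ref{invertible}), (\ref{small}), (\ref{determinant}) from them are in line with the arguments in the cited sources.

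One step needs more than you give it. In (\ref{endomorphism}) you write ``pick $g$ with $R(g)=R(f)^{-1}$'', but the realization functor is not full, so the existence of such a $g$ is not free. The missing observation is that $\End_{\mathrm{hom}}(X)$ is a finite-dimensional $F$-subalgebra of $\End(R(X))$ sharing the unit; an element invertible in the ambient matrix algebra is then not a zero-divisor in the subalgebra, hence (by finite-dimensionality) already a unit there. Lifting this inverse to $\CHM^{\ab}(k)_F$ produces your $g$, after which the nilpotence argument runs as you wrote. A smaller remark: in (\ref{determinant}) the perfectness of the pairing $X\otimes\wedge^{d-1}X\to\det X$ is not a fact about arbitrary rigid tensor categories; it requires $\wedge^{d+1}X=0$, which is precisely what Kimura finiteness together with (\ref{phantom}) provide.
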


\begin{corollary}\label{corollaryweight}
Any motive of abelian type can be written as a sum of pure motives. Any pure motive of weight $n$ can be written as  a direct factor of $\mathfrak{h}^1(A)^{\otimes n+2m}(m)$, for some abelian variety $A$ and some integer $m$.
\end{corollary}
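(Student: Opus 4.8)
The plan is to exhibit every object, and then every \emph{pure} object, of $\CHM^{\ab}(k)_{F}$ as a direct factor of an explicit tensor construction on the motives $\mathfrak h^1$, and to organize the resulting direct sum decompositions by weight using the $\ell$-adic realization together with Kimura finiteness. \emph{Step 1: $\CHM^{\ab}(k)_{F}$ is generated by the $\mathfrak h^1$'s.} Let $\mathcal F\subseteq\CHM^{\ab}(k)_{F}$ be the full subcategory of direct factors of finite sums $\bigoplus_j \mathfrak h^1(A_j)^{\otimes n_j}(m_j)$ with $n_j\ge 0$. It is pseudo-abelian by construction; stable under $\otimes$ since $\mathfrak h^1(A)$ and $\mathfrak h^1(B)$ are direct factors of $\mathfrak h^1(A\times B)$ (Theorem~\ref{classic}(\ref{kings})); stable under duals since $\mathfrak h^1(A)^\vee\cong\mathfrak h^1(A)(1)$ (Theorem~\ref{classic}(\ref{polarization})); it contains $\one$ because $\one(-1)$ is a direct factor of $\mathfrak h^1(A)^{\otimes 2}$ (Corollary~\ref{ultimo}(\ref{lefsche})); and it contains each $M(A)=\bigoplus_i\Sym^i\mathfrak h^1(A)$ because $\Sym^i\mathfrak h^1(A)$ is a direct factor of $\mathfrak h^1(A)^{\otimes i}$ (Theorem~\ref{classic}(\ref{kunneth}),(\ref{kunn})). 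Being rigid, pseudo-abelian and containing all $M(A)$, $\mathcal F$ equals $\CHM^{\ab}(k)_{F}$. Moreover each $\mathfrak h^1(A_j)^{\otimes n_j}(m_j)$ is pure, its realization being concentrated in a single cohomological degree.

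\emph{Step 2: splitting a direct factor by weight.} Let $X$ be a direct factor of $N=\bigoplus_w N^{(w)}$ with each $N^{(w)}$ pure of weight $w$; fix $i\colon X\to N$, $p\colon N\to X$ with $pi=\id_X$, put $e=ip$, and let $\pi^{(w)}=i_w p_w\in\End N$ be the motivic projector onto $N^{(w)}$ (so $p_w i_w=\id$). In $D^b(\Q_\ell)$ there are no morphisms between objects concentrated in distinct degrees, so $\ladic(e)$ is block diagonal with respect to $\ladic(N)=\bigoplus_w\ladic(N^{(w)})$; using $pe=p$ one checks that $\rho_w:=p\,\pi^{(w)}\,i\in\End X$ are carried by $\ladic$ to orthogonal idempotents with $\sum_w\rho_w=\id_X$. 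By Kimura finiteness this decomposition of the homological motive of $X$ lifts to $\CHM^{\ab}(k)_{F}$ (Theorem~\ref{kimura}(\ref{projectors})), so $X=\bigoplus_w X^{(w)}$ with $\ladic(X^{(w)})$ in degree $w$; each $X^{(w)}$ is then zero or pure of weight $w$, proving the first sentence. If $X$ is itself pure of weight $n$, then $\ladic(\rho_w)$ (an idempotent whose image lies in degree $w$ as well as in degree $n$) vanishes for $w\neq n$ and $\ladic(\rho_n)=\id$; since $(p\,i_n)\circ(p_n i)=\rho_n$, the map $p_n i\colon X\to N^{(n)}$ realizes to a split injection, and it is one by Theorem~\ref{kimura}(\ref{endomorphism}). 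Hence a pure motive of weight $n$ is a direct factor of $N^{(n)}$, i.e. of a finite sum $\bigoplus_j\mathfrak h^1(A_j)^{\otimes n_j}(m_j)$ all of whose summands have weight $n$, that is, $n_j=n+2m_j$.

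\emph{Step 3: collapsing the sum.} Put $B=\prod_j A_j$ and $m^{\ast}=\max_j m_j$, and note $n+2m^{\ast}\ge n_j\ge 0$. From $\mathfrak h^1(B)^{\otimes(n+2m^{\ast})}(m^{\ast})=\mathfrak h^1(B)^{\otimes n_j}(m_j)\otimes\bigl(\mathfrak h^1(B)^{\otimes 2}(1)\bigr)^{\otimes(m^{\ast}-m_j)}$, together with $\one\subseteq\mathfrak h^1(B)^{\otimes 2}(1)$ (Corollary~\ref{ultimo}(\ref{lefsche})) and $\mathfrak h^1(A_j)\subseteq\mathfrak h^1(B)$ (Theorem~\ref{classic}(\ref{kings})), each $\mathfrak h^1(A_j)^{\otimes n_j}(m_j)$ is a direct factor of $Z:=\mathfrak h^1(B)^{\otimes(n+2m^{\ast})}(m^{\ast})$; hence $X$ is a direct factor of $Z^{\oplus r}$, $r$ being the number of summands. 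Next, $\one^{\oplus r}$ is a direct factor of $\mathfrak h^1(E^r)^{\otimes 2}(1)$ for any elliptic curve $E$ (take the $r$ diagonal copies of $\one\subseteq\mathfrak h^1(E)^{\otimes 2}(1)$ inside $\mathfrak h^1(E^r)^{\otimes 2}(1)=\bigl(\bigoplus_a\mathfrak h^1(E)\bigr)^{\otimes 2}(1)$). Thus $Z^{\oplus r}=Z\otimes\one^{\oplus r}$ is a direct factor of $Z\otimes\mathfrak h^1(E^r)^{\otimes 2}(1)$, which, since $\mathfrak h^1(B)$ and $\mathfrak h^1(E^r)$ are direct factors of $\mathfrak h^1(B\times E^r)$ (Theorem~\ref{classic}(\ref{kings})), is a direct factor of $\mathfrak h^1(A)^{\otimes(n+2m)}(m)$ with $A=B\times E^r$ and $m=m^{\ast}+1$. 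Composing the chain of "direct factor" relations proves the second sentence.

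The only non-formal input is Step 2: over an arbitrary base field the $\Hom$-groups of $\CHM^{\ab}(k)_{F}$ need not respect weights (for instance $\Hom(\one,\mathfrak h^1(E))$ can be non-zero), so one cannot diagonalize the idempotent $e$ directly; the remedy is to diagonalize it after applying $\ladic$, where all cross-weight morphisms die because $D^b(\Q_\ell)$ is semisimple, and then to transport the decomposition back to $\CHM^{\ab}(k)_{F}$ via Kimura finiteness. Steps 1 and 3 are purely formal manipulations with $\otimes$, $\oplus$ and direct factors.
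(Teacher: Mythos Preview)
Your proof is correct and follows essentially the same strategy as the paper's: embed an arbitrary abelian-type motive as a direct factor of a finite sum of weight-pure pieces, lift the resulting weight decomposition of the homological motive via Kimura finiteness (Theorem~\ref{kimura}(\ref{projectors})), and then collapse the remaining finite direct sum into a single $\mathfrak h^1(A)^{\otimes n+2m}(m)$ using Corollary~\ref{ultimo}(\ref{lefsche}) and Theorem~\ref{classic}(\ref{kings}). The only notable difference is in the collapsing step: the paper simply observes that $\bigoplus_i \mathfrak h^1(A_i)^{\otimes n+2m}(m)$ sits inside $\mathfrak h^1(\prod_i A_i)^{\otimes n+2m}(m)$ via the multinomial expansion of $(\bigoplus_i \mathfrak h^1(A_i))^{\otimes n+2m}$, whereas you first reduce each summand to a common $Z=\mathfrak h^1(B)^{\otimes n+2m^\ast}(m^\ast)$ and then absorb the multiplicity $r$ by tensoring with $\one^{\oplus r}\subset \mathfrak h^1(E^r)^{\otimes 2}(1)$. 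Both are valid; the paper's route avoids the auxiliary elliptic curve, while yours makes the bookkeeping of the split monomorphism $p_n i$ (and the use of Theorem~\ref{kimura}(\ref{endomorphism})) more explicit than the paper does.
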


\begin{proof}
By K\"unneth formula, we have that \[M(A)(m)\otimes M(A')(m')=M(A\times A')(m+m'),\] hence any motive of abelian type is a direct factor of a finite sum of the form $\oplus_i M(A_i)(m_i)$. Write the K\"unneth decompositions of the motives  $M(A_i)$ (Theorem \ref{classic}(\ref{kunneth})). They induce a  K\"unneth decomposition for the homological motive associated with $X$. Using Theorem \ref{kimura}(\ref{projectors}) we lift this into a decomposition of $X$ refining the K\"unneth decomposition of $\oplus_i M(A_i)(m_i)$. This shows the first part of the statement and moreover  that $X_n$, the pure factor of $X$ of weight $n$,  is a direct factor of $\oplus_i \mathfrak{h}^{n+2m_i}(A_i)(m_i)$. 
 
 Now, by Theorem \ref{classic}(\ref{kunn}),  the motive $\oplus_i \mathfrak{h}^{n+2m_i}(A_i)(m_i)$ is a direct factor of  $\oplus_i \mathfrak{h}^1(A_i)^{\otimes n+2m_i}(m_i)$. Take a positive integer $m$ bigger than all the $m_i$ and use Corollary \ref{ultimo}(\ref{lefsche}) to deduce that  $\oplus_i \mathfrak{h}^1(A_i)^{\otimes n+2m_i}(m_i)$ is a direct factor of  $\oplus_i \mathfrak{h}^1(A_i)^{\otimes n+2m}(m)$.
 
 On the other hand $\mathfrak{h}^1(\times_i A_i) = \oplus_i \mathfrak{h}^1(A_i)$ by Theorem \ref{classic}(\ref{kings}), hence the motive $\oplus_i \mathfrak{h}^1(A_i)^{\otimes n+2m}(m)$ is a direct factor of $\mathfrak{h}^1(\times_i A_i)^{\otimes n+2m}(m)$. Putting all together we deduce that $X_n$ is a direct factor of $\mathfrak{h}^1(\times_i A_i)^{\otimes n+2m}(m)$. 
\end{proof}

\section{Autoduality of motives}\label{autoduality}

We keep the notations from the previous section. In this section we prove a criterion to check conservativity of realization on Chow motives of abelian type.

\begin{proposition}\label{preliminar1}
Let $X$ and $Y$ be two motives of abelian type and ${f:X \rightarrow Y}$  and $g:Y \rightarrow X$ be two morphisms. Let  $R$ be a realization functor and suppose that $R(f)$ and $R(g)$ are isomorphisms. Then $f$ and $g$ are isomorphisms too. \end{proposition}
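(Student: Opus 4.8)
The plan is to reduce to the endomorphism case covered by Theorem \ref{kimura}(\ref{endomorphism}), using Kimura finiteness to control nilpotents. First I would consider the composite $g \circ f \colon X \to X$. Its realization $R(g)\circ R(f)$ is an isomorphism by hypothesis, so by Theorem \ref{kimura}(\ref{endomorphism}) the map $g\circ f$ is an isomorphism in $\CHM^{\ab}(k)_F$. Symmetrically, $f \circ g \colon Y \to Y$ has isomorphism realization, hence $f\circ g$ is an isomorphism as well. From these two facts one concludes formally: if $g\circ f$ is invertible then $f$ is a split monomorphism and $g$ is a split epimorphism; if moreover $f\circ g$ is invertible then $g$ is a split monomorphism and $f$ a split epimorphism. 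A map that is simultaneously a split mono and a split epi is an isomorphism, so both $f$ and $g$ are isomorphisms.

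The one point that needs care is the very first step, namely the input Theorem \ref{kimura}(\ref{endomorphism}): it applies to endomorphisms of a single motive of abelian type, and $X$ is such a motive (the class of motives of abelian type is closed under the operations used, and $g\circ f$ is an honest endomorphism of $X$ in $\CHM^{\ab}(k)_F$). So no extra hypothesis is required beyond what is stated. Concretely, Theorem \ref{kimura}(\ref{endomorphism}) is itself a consequence of Kimura finiteness: writing $\varphi = g\circ f$, since $R(\varphi)$ is invertible the map $\varphi$ is invertible modulo the ideal of morphisms with zero realization, and on a Kimura-finite object this ideal is nilpotent, so $\varphi$ differs from an invertible map by a nilpotent and is therefore itself invertible. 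I would simply cite this and not reprove it.

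Thus the argument is essentially formal once Theorem \ref{kimura}(\ref{endomorphism}) is invoked twice. I expect the main (minor) obstacle to be purely bookkeeping: making sure that $X$, $Y$ and hence the composites genuinely live in the pseudo-abelian rigid category $\CHM^{\ab}(k)_F$ so that the cited theorem applies verbatim, and spelling out the elementary category-theoretic implication ``split mono $+$ split epi $\Rightarrow$ iso''. No delicate estimate or geometric input is needed here; the substantive content of the section will instead appear in the later propositions (e.g. Proposition \ref{preliminar3}), where autoduality of $\mathfrak{h}^1$ is genuinely used.
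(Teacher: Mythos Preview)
Your argument is correct and shares the paper's key step: apply Theorem \ref{kimura}(\ref{endomorphism}) to the composite $g\circ f$. The only difference is the endgame: you invoke Theorem \ref{kimura}(\ref{endomorphism}) a second time on $f\circ g$ and finish with the formal ``split mono $+$ split epi $\Rightarrow$ iso'' argument, whereas the paper uses just the one application to get a left inverse $h\circ g$ for $f$, observes that $f\circ(h\circ g)$ is then an idempotent on $Y$ with complementary summand $H$ having $R(H)=0$, and kills $H$ via Theorem \ref{kimura}(\ref{phantom}). Both routes are equally short; yours avoids the appeal to (\ref{phantom}) at the cost of a second use of (\ref{endomorphism}).
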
 

\begin{proof}
We do the proof for $f$ (of course the situation is symmetric). The realization of $g\circ f$ is an isomorphism, so, by Theorem \ref{kimura} (\ref{endomorphism}), $g\circ f$ is an isomorphism too. In particular, we can find a morphism $h: X \rightarrow X$ such that $(h\circ g) \circ f= \id_X$. This implies that $ f \circ (h\circ g): Y \rightarrow Y$ is a projector defining $X$ as a direct factor of $Y$, hence $Y= X\oplus H$. But the factor $H$ has zero realization, so it is actually zero, which means that $f$ and $(h\circ g) $ are one the inverse of the other.
\end{proof}

\begin{proposition}\label{preliminar2}
Let $X$ and $Y$ be two pure motives of abelian type of same weight and dimension. Let $f:X \rightarrow Y$  be a morphism such that $\det f$ is an isomorphism. Then $f$ is an isomorphism too.
\end{proposition}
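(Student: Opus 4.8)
The plan is to manufacture, out of the hypothesis on $\det f$ alone, a companion morphism $g\colon Y\to X$ such that both $R(f)$ and $R(g)$ are isomorphisms for a (hence any) realization functor $R$, and then to invoke Proposition \ref{preliminar1}. I will assume that the common weight is even, so that $\det(-)=\wedge^{d}(-)$ with $d$ the common dimension; the odd case is identical, with $\Sym$ replacing $\wedge$ and the second formula of Theorem \ref{kimura}(\ref{determinant}).

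First I would check that $R(f)$ itself is an isomorphism. Since $R$ is a tensor functor it commutes with $\wedge^{d}$ on objects and morphisms, so $R(\det f)=\det R(f)$; because $X$ and $Y$ have the same weight and dimension, $R(f)$ is a morphism of $d$-dimensional vector spaces sitting in one and the same degree, and $\det R(f)$ being an isomorphism of lines forces $R(f)$ to be invertible.

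Next I would build $g$. By Theorem \ref{kimura}(\ref{determinant}) we have $X^{\vee}\cong\wedge^{d-1}X\otimes(\det X)^{\vee}$, and by Theorem \ref{kimura}(\ref{invertible}) the object $\det X$ is invertible; tensoring by $\det X$ yields a canonical isomorphism $\wedge^{d-1}X\cong X^{\vee}\otimes\det X$, and likewise for $Y$. Through these identifications $\wedge^{d-1}f$ is a morphism $X^{\vee}\otimes\det X\to Y^{\vee}\otimes\det Y$; dualizing it (objects of abelian type are rigid) gives a morphism $Y\otimes(\det Y)^{\vee}\to X\otimes(\det X)^{\vee}$, and then inserting $\one\cong(\det Y)^{\vee}\otimes\det Y$ on the source, composing with $(\det f)^{-1}\colon\det Y\to\det X$ (which exists by hypothesis), and contracting $(\det X)^{\vee}\otimes\det X\cong\one$ on the target produces the desired $g\colon Y\to X$. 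This $g$ is assembled only from $f$, from $\det f$ and from canonical structure maps, so it is a genuine morphism in $\CHM^{\ab}(k)_{F}$. Applying $R$, which commutes with $\wedge^{d-1}$, with duals and tensor products, and sends the canonical evaluations and coevaluations to the canonical ones, turns this recipe into the corresponding linear-algebra recipe applied to $R(f)$; for an invertible matrix that recipe returns the adjugate (cofactor) matrix divided by the determinant, that is, the inverse, so $R(g)=R(f)^{-1}$ is an isomorphism. Proposition \ref{preliminar1} applied to $f$ and $g$ then finishes the argument.

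I expect the only real obstacle to be the bookkeeping in this last step: chasing the various canonical isomorphisms (the identification of $\wedge^{d-1}$, reflexivity, the invertibility of $\det X$, and the Koszul signs in the odd case) and checking that $R$ really carries the construction of $g$ onto the classical cofactor formula for $R(f)^{-1}$. Even granting some imprecision there, it is enough to note that the adjugate of an invertible matrix is itself invertible, so $R(g)$ is an isomorphism independently of the exact identification; everything else is immediate.
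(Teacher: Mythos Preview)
Your proposal is correct and follows essentially the same route as the paper: both arguments first deduce from the invertibility of $\det f$ that $R(f)$ is an isomorphism, then manufacture $g\colon Y\to X$ out of $(\wedge^{d-1}f)^{\vee}$ tensored with $(\det f)^{-1}$ via the identification of Theorem~\ref{kimura}(\ref{determinant}), and conclude by Proposition~\ref{preliminar1}. Your write-up is in fact more explicit about the bookkeeping (the adjugate/cofactor interpretation) than the paper's, which is terse about the auxiliary map it denotes by $h$.
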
 

\begin{proof}
We call $n$ the weight and $d$ the dimension and write the proof for $n$ even (the odd case is analogous).
Let us fix a realization functor $R$. As $\det f$ is an isomorphism, then $R(f)$ must be an isomorphism. This implies that  $R(\wedge^i f )$ is an isomorphism, for any $i$. Then the realization of the map  
\[(\wedge^{d-1} f)^\vee \otimes h : (\wedge^{d-1} Y)^\vee \otimes \det Y \rightarrow (\wedge^{d-1} X)^\vee \otimes \det X\]
 is an isomorphism.  Using Theorem \ref{kimura}(\ref{determinant}) we have constructed a map
 ${g: Y \rightarrow X}$ whose realization is an isomorphism. We conclude using Proposition \ref{preliminar1}.
\end{proof}

\begin{proposition}\label{preliminar3}
Suppose that for all pure motives $X \in \CHM^{\ab}(k)_{F}$ of even weight $n$ and dimension one we have an isomorphism
\[X \cong X^\vee(-n).\]
Then any realization functor is conservative.
\end{proposition}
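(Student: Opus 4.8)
The plan is to fix an arbitrary realization functor $R$ and show that a morphism $f\colon X\to Y$ in $\CHM^{\ab}(k)_F$ with $R(f)$ an isomorphism is itself an isomorphism.

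\textbf{Reduction to pure motives of a single weight.} By Corollary~\ref{corollaryweight} I would write $X=\bigoplus_n X_n$ and $Y=\bigoplus_n Y_n$ with $X_n,Y_n$ pure of weight $n$. Since $R(X_n)$ and $R(Y_m)$ sit in degrees $n$ and $m$, the off-diagonal components of $R(f)$ vanish, so $R(f)=\bigoplus_n R(f_{nn})$; in particular the weights occurring in $X$ and $Y$ agree and each $R(f_{nn})\colon R(X_n)\to R(Y_n)$ is an isomorphism. If the statement is known for pure motives of a single weight, each $f_{nn}$ is an isomorphism, and then $g:=\bigoplus_n f_{nn}^{-1}\colon Y\to X$ has $R(g)=R(f)^{-1}$ an isomorphism, so Proposition~\ref{preliminar1} applied to $f$ and $g$ gives that $f$ is an isomorphism. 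Note that one cannot simply discard the off-diagonal part of $f$: morphisms between pure motives of abelian type of different weights are not known to vanish (this amounts to cases of Beauville's conjecture), so the inverse must be produced by hand. From now on $X,Y$ are pure of the same weight $n$ and, as $R(f)$ is an isomorphism, of the same dimension $d$; if $d=0$ then $X=Y=0$ by Theorem~\ref{kimura}(\ref{phantom}) and there is nothing to prove, so I may assume $d\geq1$.

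\textbf{The even-weight case.} Suppose $n$ is even. Then $\det X$ and $\det Y$ are pure of weight $nd$ (again even) and of dimension one, and $R(\det f)=\det R(f)$ is an isomorphism. By the hypothesis of the Proposition there are isomorphisms $\det X\cong(\det X)^\vee(-nd)$ and $\det Y\cong(\det Y)^\vee(-nd)$; transporting the dual morphism $(\det f)^\vee\colon(\det Y)^\vee\to(\det X)^\vee$ through them and untwisting by $(nd)$ produces a morphism $g\colon\det Y\to\det X$ whose realization, being conjugate to $(R(\det f))^\vee$, is an isomorphism. Proposition~\ref{preliminar1} applied to $\det f$ and $g$ shows that $\det f$ is an isomorphism, and Proposition~\ref{preliminar2} then gives that $f$ is an isomorphism. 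This is exactly where the autoduality hypothesis is used: even for pure motives $R$ is not a priori known to detect isomorphisms, and self-duality up to twist of the one-dimensional determinant is precisely what makes Proposition~\ref{preliminar1} applicable.

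\textbf{The odd-weight case, and the main difficulty.} Suppose $n$ is odd, and fix an elliptic curve $E$ over $k$ (one exists over every field), so $\mathfrak{h}^1(E)$ is pure of weight one and nonzero. Then $X\otimes\mathfrak{h}^1(E)$ and $Y\otimes\mathfrak{h}^1(E)$ are of abelian type and pure of even weight $n+1$, and $R(f\otimes\id_{\mathfrak{h}^1(E)})$ is an isomorphism; by the even-weight case, $f\otimes\id_{\mathfrak{h}^1(E)}$, hence also $f\otimes\id_{\mathfrak{h}^1(E)\otimes\mathfrak{h}^1(E)^\vee}$, is an isomorphism. Since $\one$ is a direct summand of $\mathfrak{h}^1(E)\otimes\mathfrak{h}^1(E)^\vee$ (by Theorem~\ref{classic}(\ref{polarization}) and Corollary~\ref{ultimo}(\ref{lefsche}), or because $\mathfrak{h}^1(E)$ has nonzero categorical dimension), tensoring this splitting with $\id_X$ and with $\id_Y$ exhibits $X$ and $Y$ as direct summands of $X\otimes\mathfrak{h}^1(E)\otimes\mathfrak{h}^1(E)^\vee$ and $Y\otimes\mathfrak{h}^1(E)\otimes\mathfrak{h}^1(E)^\vee$ compatibly with $f\otimes\id\otimes\id$; restricting this last isomorphism to the summands recovers $f$, which is therefore an isomorphism. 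The genuine obstacle throughout is that $R$ is not known to be faithful, nor even to detect isomorphisms, on pure motives of abelian type; the autoduality input is calibrated exactly to overcome this via the determinant, while the weight decomposition and the trick with $\mathfrak{h}^1(E)$ are the routine bookkeeping around it.
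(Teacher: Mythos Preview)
Your proof is correct but diverges from the paper's after the (identical) reduction to the pure case. The paper first reduces to dimension one via Proposition~\ref{preliminar2} and then, given pure $X,Y$ of dimension one and \emph{arbitrary} weight $n$, applies the autoduality hypothesis a single time to the motive $Y\otimes X^\vee$, which is always of weight $0$ (hence even) and dimension one; the resulting isomorphism $Y\otimes X^\vee\cong X\otimes Y^\vee$ is combined with invertibility (Theorem~\ref{kimura}(\ref{invertible})) to produce $g\colon Y\to X$ directly, with no parity split. Your route instead applies the hypothesis to $\det X$ and $\det Y$ separately---which only have even weight when $n$ is even---and then repairs the odd case by tensoring with $\mathfrak{h}^1(E)$ and retracting along the summand $\one\hookrightarrow\mathfrak{h}^1(E)\otimes\mathfrak{h}^1(E)^\vee$. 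Both arguments are sound; the paper's is shorter and uniform in the parity, while yours makes the role of the determinant more explicit and exhibits the pleasant general fact that a morphism becoming invertible after tensoring with a dualizable object of nonzero categorical dimension is already invertible.
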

\begin{proof}
Let us fix a realization functor $R$ and $f:X\rightarrow Y$ a map of abelian motives such that $R(f)$ is an isomorphism. The aim is to show that $f$ is also an isomorphism.

First, write two (finite) decompositions   $X=\oplus_n X_n$ and $Y=\oplus_n Y_n$, where $X_n$ and $Y_n$ are pure of weight $n$ (Corollary \ref{corollaryweight}). The map $f$ induces morphisms $f_n:X_n \rightarrow Y_n$ (but in general $f$ is not just the sum of the $f_n$). Note that $R(f_n)$ is an isomorphism. It is enough to show that each $f_n$ is an isomorphism.  Indeed,  the inverses $g_n$ of the $f_n$ induce a morphism $g:Y \rightarrow X$ allowing to apply Proposition \ref{preliminar1}.

We reduced to the case where $X$ and $Y$ are pure. By Proposition \ref{preliminar2} it is enough to show that $\det f$ is an isomorphism, hence we reduced to the case where $X$ and $Y$ are pure of dimension one. 

By Proposition \ref{preliminar1}, it is enough  to construct a morphism $g:Y \rightarrow X$ whose realization is an isomorphism (or equivalently non zero). It is constructed as follows
\[Y =Y\otimes \one \cong (Y\otimes X^\vee) \otimes X \cong (X \otimes Y^\vee)\otimes X \rightarrow  (X \otimes Y^\vee)\otimes Y\ = X\]
where the first isomorphism comes from Theorem \ref{kimura}(\ref{invertible}) and the second from the hypothesis.
\end{proof}

\section{Abelian varieties over finite fields}\label{finitefields}
We recall here some classical results on abelian varieties over finite fields due to Tate \textit{et al.} and we give some consequences. In all the section we fix a polarized abelian variety $A$ of dimension $g$ over a finite field $k$. We denote by $\End(A)$ the ring of endomorphism of $A$,  we write $\End^0(A)$ for $\End(A)\otimes_\Z \Q$ and $*$ for the Rosati involution on it.

\begin{theorem}\label{classical}
With the above notations the following holds.
\begin{enumerate}
\item\label{tate} \cite{Tate}  A maximal commutative $\Q$-subalgebra of $\End^0(A)$ has dimension $2g$.
\item\label{Yu} \cite[\S 2.2]{yuchia} There exists a maximal commutative $\Q$-subalgebra $B$ of $\End^0(A)$ which is $*$-stable.
\item\label{CM}\label{mumford} \cite[pp. 211-212]{MumfordTata} The algebra $B$ is a finite product of CM number fields ${B=L_1\times \cdots\times L_t}$ and $*$ acts as the complex conjugation on each factor.
\item\label{compositum}  \cite[Proposition 5.12]{goro} The compositum of the number fields $L_1,\ldots, L_t$ is a CM field. There exist a CM number field $L$, which is Galois over $\Q$ and which contains the compositum.
\end{enumerate}
\end{theorem}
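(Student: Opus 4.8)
All four statements are classical and are cited above, so the plan is to recall how they combine rather than to reprove them. I would first set out the structural input: $\End^0(A)$ is a finite-dimensional semisimple $\Q$-algebra and the $q$-power Frobenius endomorphism $\pi$ is central in it, so every maximal commutative $\Q$-subalgebra contains $\Q[\pi]$. Statement (\ref{tate}) is then Tate's theorem that each simple isogeny factor of $A$ has sufficiently many complex multiplications, which is exactly the assertion that $\End^0(A)$ contains an \'etale commutative $\Q$-subalgebra of dimension $2g$ and that $2g$ is the largest dimension such a subalgebra can have; statement (\ref{Yu}) is the refinement of \cite[\S 2.2]{yuchia} producing among these one that is stable under the Rosati involution $*$.

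For (\ref{CM}) I would fix such a $*$-stable $B$ and exploit positivity of $*$ three times over. Positivity means the form $z \mapsto \mathrm{Tr}(z z^*)$, with $\mathrm{Tr}$ computed on a faithful representation (for instance $\ell$-adic cohomology), is positive definite on $\End^0(A)$, hence on $B$. First, this forces $B$ to be semisimple: a nonzero $z$ in the radical of $B$ would give $z z^*$ nilpotent, hence of trace zero, contradicting positivity; so $B = L_1 \times \cdots \times L_t$ with the $L_i$ number fields. Second, positivity prevents $*$ from interchanging two distinct factors, since testing the form on the difference $e_i - e_j$ of the two idempotents yields a negative value; so $*$ stabilizes each $L_i$ and restricts there to a positive involution of a number field, which is either trivial (and $L_i$ totally real) or complex conjugation (and $L_i$ a CM field). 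Third, that the trivial case does not occur — so that every $L_i$ is CM and $*$ is complex conjugation on it — is Mumford's argument, which uses $\pi \in B$, the relation $\pi \pi^* = q$ coming from the polarization, and the maximality of $B$.

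The only part calling for a genuine (if standard) argument is (\ref{compositum}), where the plan is to invoke two classical facts about CM fields: inside a fixed $\overline{\Q} \subset \C$, the compositum of finitely many CM fields is again CM; and the Galois closure over $\Q$ of a CM field is CM, being the compositum of its finitely many conjugate fields, each of which is CM. Granting these, I would take for $L$ the Galois closure over $\Q$ of the compositum $L_1 \cdots L_t$: it is CM, Galois over $\Q$, and contains every $L_i$ and hence the compositum. The compositum fact itself follows by letting $c$ denote complex conjugation on $\C$: since each $L_i$ is CM, the nontrivial automorphism of $L_i$ over its maximal totally real subfield is induced by every complex embedding, so $c(L_i) = L_i$; therefore $c$ stabilizes $L_1 \cdots L_t$, acts nontrivially on it, and has totally real fixed subfield of index two, making the compositum CM.

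I do not expect a deep obstacle: (\ref{tate}), (\ref{Yu}) and (\ref{CM}) are taken essentially verbatim from the references and (\ref{compositum}) reduces to classical properties of CM fields. The point that most rewards care is (\ref{CM}): one must keep track of the three separate uses of positivity of the Rosati form — semisimplicity of $B$, non-permutation of the factors, and (together with $\pi \pi^* = q$) the exclusion of totally real factors — which is precisely the content of the cited pages of \cite{MumfordTata}.
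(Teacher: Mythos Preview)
The paper does not prove this theorem at all: each of the four items is simply attributed to the indicated reference, and the text moves directly to the corollary. Your proposal is correct and in fact supplies considerably more than the paper does, namely a coherent sketch of the classical arguments behind the citations; nothing in it conflicts with the paper's treatment.
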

Write $\Sigma_i$ for the set of embeddings of $L_i$ in $L$ and   $\Sigma$ for the disjoint union of the $\Sigma_i$ (with $i$ varying). Write $\bar{\cdot}$ for the action on $\Sigma$ induced by composition with the complex conjugation. 
\begin{corollary}\label{decomposition}
We keep the notations as above, in particular $L$ is defined in Theorem \ref{classical}(\ref{compositum}). In $\CHM^{\ab}(k)_{L}$ the motive $\mathfrak{h}^1(A)$ decomposes into a sum of $2g$ motives of dimension one
\[\mathfrak{h}^1(A) =\bigoplus_{\sigma \in \Sigma} M_\sigma,\]
where the action of $b\in L_i$ on $M_\sigma$ induced by Theorem \ref{classic}(\ref{kings}) is given by multiplication by $\sigma(b)$ if $\sigma \in \Sigma_i$ and by multiplication by zero otherwise. 

Moreover the isomorphism $p:  \mathfrak{h}^1(A)\cong\mathfrak{h}^1(A)^{\vee}(-1)$ of Theorem \ref{classic}(\ref{polarization}) restricts to an isomorphism \[M_\sigma\cong M_{\bar{\sigma}}^\vee(-1)\] for all $\sigma$, and to the zero map 
\[ M_\sigma \stackrel{0}{\longrightarrow} M_{\sigma'}^\vee(-1) \]
for all $\sigma'\neq {\bar{\sigma}}$.
\end{corollary}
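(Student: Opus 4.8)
The plan is to first decompose $\mathfrak{h}^1(A)$ using the commutative algebra $B = L_1 \times \cdots \times L_t$ acting on it. By Theorem \ref{classic}(\ref{kings}), we have $\End^0(A) \otimes_\Q L$ acting on $\mathfrak{h}^1(A)$ inside $\CHM^{\ab}(k)_L$, and in particular the étale $L$-algebra $B \otimes_\Q L \cong \prod_{i}\prod_{\sigma \in \Sigma_i} L$ acts, where the last isomorphism sends $b \in L_i$ to $(\sigma(b))_\sigma$. The idempotents of this product algebra give a decomposition $\mathfrak{h}^1(A) = \bigoplus_{\sigma \in \Sigma} M_\sigma$ with the prescribed $L_i$-action. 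To see that each $M_\sigma$ has dimension one, I would compute the dimension of its realization: $R(\mathfrak{h}^1(A))$ is $2g$-dimensional over $L$, carries an action of $B \otimes_\Q L$, and by semisimplicity (and Theorem \ref{classical}(\ref{tate}), which says $B$ has dimension $2g$ so that $R(\mathfrak{h}^1(A))$ is free of rank one over $B$), each idempotent cuts out a one-dimensional piece. Then, being a direct factor of $\mathfrak{h}^1(A)$, each $M_\sigma$ is a motive of abelian type, and having one-dimensional realization it is a pure motive of dimension one in the sense of Definition \ref{pure abelian}.

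Next I would analyze how the polarization isomorphism $p \colon \mathfrak{h}^1(A) \cong \mathfrak{h}^1(A)^\vee(-1)$ interacts with this decomposition. The key input is Theorem \ref{classical}(\ref{CM}): the Rosati involution $*$ acts on $B$ as complex conjugation on each factor $L_i$. Concretely, the polarization pairing satisfies the compatibility $\langle b x, y\rangle = \langle x, b^* y\rangle$ for $b \in \End^0(A)$; dually, this says that $p \circ b = (b^*)^\vee(-1) \circ p$ as maps $\mathfrak{h}^1(A) \to \mathfrak{h}^1(A)^\vee(-1)$. Now consider the composite $M_\sigma \hookrightarrow \mathfrak{h}^1(A) \xrightarrow{p} \mathfrak{h}^1(A)^\vee(-1) \twoheadrightarrow M_{\sigma'}^\vee(-1)$. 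For $b \in L_i$ with $\sigma \in \Sigma_i$: precomposing with the action of $b$ multiplies this composite by $\sigma(b)$, while by the Rosati compatibility it also equals the composite postcomposed with the action of $b^*$ on the target, which (if $\sigma' \in \Sigma_j$) multiplies by $\sigma'(b^*) = \overline{\sigma'(b)}$ if $i = j$ and by $0$ if $i \neq j$. So the composite $M_\sigma \to M_{\sigma'}^\vee(-1)$ is killed unless $\sigma$ and $\sigma'$ lie in the same $\Sigma_i$ and $\sigma(b) = \overline{\sigma'(b)}$ for all $b \in L_i$, i.e. $\sigma' = \bar\sigma$. This shows the off-diagonal blocks $M_\sigma \to M_{\sigma'}^\vee(-1)$ vanish for $\sigma' \neq \bar\sigma$.

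Finally, since $p$ is an isomorphism and all off-diagonal components relative to the decompositions $\mathfrak{h}^1(A) = \bigoplus M_\sigma$ and $\mathfrak{h}^1(A)^\vee(-1) = \bigoplus M_{\sigma'}^\vee(-1)$ vanish except those from $M_\sigma$ to $M_{\bar\sigma}^\vee(-1)$, the matrix of $p$ with respect to these decompositions is "block anti-diagonal"; invertibility of $p$ then forces each component $M_\sigma \to M_{\bar\sigma}^\vee(-1)$ to be an isomorphism. (One can also argue directly: this component is a nonzero map between two motives of dimension one, hence an isomorphism by Theorem \ref{kimura}(\ref{small}) — a nonzero endomorphism of a dimension-one motive, after identifying source and target, is a nonzero scalar. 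Nonzeroness follows because $p$ is invertible and would otherwise have a vanishing block-column or block-row.)

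The main obstacle I anticipate is being careful about the direction of the Rosati compatibility under dualization — that is, correctly tracking that the polarization duality turns the action of $b$ into the action of $b^*$ on the dual, and then correctly reading off that $\sigma' = \bar\sigma$ rather than $\sigma' = \sigma$. The rest is bookkeeping with idempotents of the étale algebra $B \otimes_\Q L$ and the rank count via Theorem \ref{classical}(\ref{tate}), together with Theorem \ref{kimura}(\ref{projectors}) to ensure the realization-level decomposition genuinely lifts to $\CHM^{\ab}(k)_L$.
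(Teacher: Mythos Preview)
Your proof is correct and follows the same strategy as the paper: decompose $\mathfrak{h}^1(A)$ via the idempotents of $B\otimes_{\Q} L$ acting through Theorem~\ref{classic}(\ref{kings}), then use that the Rosati involution is complex conjugation to analyze the polarization block by block. The only difference is that for the second part the paper invokes Corollary~\ref{ultimo}(\ref{boh}) to reduce the vanishing of the off-diagonal components of $p$ (and the nonvanishing of the remaining ones) to a statement in the realization, where the Rosati compatibility is classical, whereas you run the compatibility $p\circ b=(b^{*})^{\vee}(-1)\circ p$ directly at the level of Chow motives; both routes are valid. Your closing remark about Theorem~\ref{kimura}(\ref{projectors}) is unnecessary: the idempotents already live in $\End_{\CHM^{\ab}(k)_{L}}(\mathfrak{h}^1(A))$ by Theorem~\ref{classic}(\ref{kings}), so no lifting from the realization is needed.
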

\begin{proof}
Consider the injection $L_1\times \cdots\times L_t  \hookrightarrow  \End^0(A).$ By Theorem \ref{classic}(\ref{kings}), we deduce an injection $(\prod_i L_i)\otimes L  \hookrightarrow  \End_{\CHM^{\ab}(k)_{L}}(\mathfrak{h}^1(A)).$ Each projector of $(\prod_i L_i)\otimes L \cong \prod_i L^{[L_i:\Q]}$ defines a factor $M_{\sigma}.$

The last part of the statement can be checked after realization because of Corollary \ref{ultimo}(\ref{boh}). It is then a consequence of Theorem \ref{classical}(\ref{mumford}).
\end{proof}

\begin{definition}\label{clozelprimes}
We keep notations from the theorem above and define $L_0$ to be $L\cap \R$.

Following Clozel we define a set of prime numbers $\Clo(A,*,B)$ as those primes $\ell$ (different from the characteristic of $k$), such that there is a place $\lambda$ of $L_0$ above $\ell$ such that the $\lambda$-adic completion of $L_0$  does not contain $L$.

If there are several $B\subset \End^0(A)$ as in the theorem above we can let $B$ vary and consider the union of the  $\Clo(A,*,B)$. We will call it $\Clo(A,*)$ or  simply $\Clo(A)$.
\end{definition}
\begin{proposition}\label{propclozel}\cite[\S 3]{clozel} 
Given a totally real number field $F$ and an imaginary quadratic extension $F'$, the set of primes $\ell$ such that there is a place $\lambda$ of $F$ above $\ell$ such that the $\lambda$-adic completion of $F$  does not contain $F'$ is of positive density.

In particular, the set $\Clo(A,*,B)$ as subset of the set of prime numbers is of positive density.
\end{proposition}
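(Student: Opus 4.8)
The plan is to first reduce the last sentence to the general statement of the first sentence, and then to prove that general statement by Chebotarev's density theorem applied to the Galois closure of $F'$ over $\Q$.

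For the reduction: the CM field $L$ of Theorem~\ref{classical} has $L_0=L\cap\R$ as its maximal totally real subfield, and $L/L_0$ is an imaginary quadratic extension; hence, up to removing the single prime $\mathrm{char}\,k$ (which does not affect densities), $\Clo(A,*,B)$ is precisely the set furnished by the first sentence applied with $F=L_0$ and $F'=L$. So it suffices to prove the general statement.

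For that I would first reformulate the condition. For a place $\lambda$ of $F$ over $\ell$, the completion $F_\lambda$ contains $F'$ if and only if $F'\otimes_FF_\lambda$ fails to be a field, i.e. if and only if $\lambda$ splits in $F'/F$. Discarding the finitely many primes $\ell$ ramified in the Galois closure of $F'$, the claim becomes: a positive-density set of primes $\ell$ admits a place $\lambda\mid\ell$ of $F$ which is \emph{inert} in $F'/F$. Now let $E$ be the Galois closure of $F'$ over $\Q$, with $G=\mathrm{Gal}(E/\Q)\supseteq H=\mathrm{Gal}(E/F)\supseteq N=\mathrm{Gal}(E/F')$ and $[H:N]=2$. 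Since $F'$ is totally imaginary (it is an imaginary quadratic extension of the totally real field $F$), so is $E\supseteq F'$; thus all archimedean places of $E$ are complex, $G$ permutes them transitively, and the associated complex conjugations form a single nonempty conjugacy class $C\subseteq G$. Moreover any complex conjugation $c\in G$ lies in $H\setminus N$: it fixes $F$ because $F$ is totally real, and it acts nontrivially on $F'$ because $F'$ has no real place. By Chebotarev's density theorem, the set of primes $\ell$ (unramified in $E$) with Frobenius conjugacy class $C$ has density $|C|/|G|>0$, and I claim every such $\ell$ qualifies. Indeed, fixing a prime $\mathfrak{L}$ of $E$ over $\ell$, its decomposition group is $D=\{1,c\}$ for some complex conjugation $c$, and the place $\lambda_0$ of $F$ below $\mathfrak{L}$ corresponds to the double coset $HD$. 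The places of $F'$ above $\lambda_0$ are indexed by $N\backslash HD/D$; but $c\in H\setminus N$ and $c\in D$ give $H=N\cup Nc\subseteq ND$, whence $HD=ND$ is the single $(N,D)$-double coset of the identity, so $\lambda_0$ has a unique place of $F'$ above it. As $\lambda_0$ is unramified, it is therefore inert in $F'/F$, so $F_{\lambda_0}$ does not contain $F'$. This proves the general statement, and with it the Proposition.

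The one point that needs care is the reformulation as a Chebotarev statement: one should isolate a set of Frobenius classes each of which \emph{guarantees} the existence of an inert place of $F$ above $\ell$ — here the class of complex conjugations — rather than trying to describe the opposite and more delicate condition that \emph{all} places of $F$ above $\ell$ split in $F'/F$. The short double-coset computation above is exactly what makes the choice ``Frobenius is a complex conjugation'' work; the remaining steps are routine.
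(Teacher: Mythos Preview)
The paper does not give its own proof of this proposition; it is stated with a citation to \cite[\S 3]{clozel} and no argument. Your proof is correct and is essentially Clozel's argument: apply Chebotarev to the conjugacy class of complex conjugations in the Galois closure $E$ of $F'/\Q$, and use that any complex conjugation lies in $H\setminus N$ (hence in $D$) to force $HD=ND$, i.e.\ a unique, inert place of $F'$ above the chosen place of $F$. The reduction of the ``in particular'' clause to the general statement via $F=L_0$, $F'=L$ is likewise correct, since $L$ is a CM field Galois over $\Q$ and $L/L_0$ is imaginary quadratic.
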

\begin{theorem}\label{clozel}\cite{clozel} 
Let $\ell$ be a prime number in $\Clo(A)$, then numerical and $\ell$-adic homological equivalence on $A$ (and all powers of $A$) coincide.
\end{theorem}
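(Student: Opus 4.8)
The approach I would take is that of Clozel (this statement is \cite{clozel}; I only outline the strategy, as the proof is substantial).

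\textbf{Step 1: reduce to a non-degeneracy statement.} By Jannsen's theorem numerical motives form a semisimple category, and homological equivalence is always finer than numerical equivalence; so I would first observe that $\ell$-adic homological and numerical equivalence agree on all powers of $A$ if and only if, for every $n$ and $i$, the cup-product pairing
\[\mathrm{Alg}^i_\ell(A^n)\times\mathrm{Alg}^{gn-i}_\ell(A^n)\longrightarrow\Q_\ell\]
is non-degenerate, $\mathrm{Alg}^\bullet_\ell$ denoting the image of the algebraic cycles in $\ell$-adic cohomology. The point is that the radical of this pairing is precisely the kernel of the surjection $Z^\bullet_{\mathrm{hom}}\twoheadrightarrow Z^\bullet_{\mathrm{num}}$, because the intersection form on numerical cycles is non-degenerate by construction. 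So the task becomes: produce this non-degeneracy when $\ell\in\Clo(A)$.

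\textbf{Step 2: bring in the complex multiplication.} Since $A$ has sufficiently many complex multiplications I would pass to $L$-coefficients and use the decomposition $\mathfrak h^1(A)_L=\bigoplus_{\sigma\in\Sigma}M_\sigma$ of Corollary \ref{decomposition}, with one-dimensional pieces $M_\sigma$, a polarization inducing $M_\sigma\cong M_{\bar\sigma}^\vee(-1)$ and the zero map $M_\sigma\to M_{\sigma'}^\vee(-1)$ for $\sigma'\neq\bar\sigma$. The cohomology of each $A^n$, hence the part carrying algebraic classes, is built from tensor products of the $M_\sigma$. I would also invoke Tate's theorem on divisors of abelian varieties over finite fields — divisor classes exhaust the codimension-one Tate classes — so that $\mathrm{Alg}^\bullet_\ell$ contains the subalgebra generated by divisors together with the (algebraic) K\"unneth and Lefschetz projectors available over any field. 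This reduces everything to a question about tensor constructions on the $M_\sigma$ and the pairing coming from the polarization.

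\textbf{Step 3: use $\ell\in\Clo(A)$.} Unwinding Definition \ref{clozelprimes}, the hypothesis says that some place of $L_0=L\cap\R$ above $\ell$ is not split in the CM extension $L/L_0$, i.e.\ that complex conjugation $c$ lies in the decomposition group $D_w=\mathrm{Gal}(\bar\Q_\ell/\Q_\ell)$ of some place $w\mid\ell$ of $L$. Since $D_w$ acts on $\Sigma$ through its image in $\mathrm{Gal}(L/\Q)$ and a class is $\Q_\ell$-rational exactly when it is $D_w$-invariant, the membership $c\in D_w$ forces the $\Q_\ell$-rational classes on every $A^n$ to be stable under $\sigma\mapsto\bar\sigma$. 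Together with $M_\sigma\cong M_{\bar\sigma}^\vee(-1)$, I would then show that the cup-product pairing restricted to $\mathrm{Alg}^\bullet_\ell(A^n)$ has, up to Tate twist, the Hermitian shape $x\mapsto\langle x,\,c\,x\rangle$ — a base change of the form attached to the Rosati involution — which is positive definite by the classical positivity of Rosati. Positivity yields non-degeneracy, and Step 1 then gives $\mathrm{hom}_\ell=\mathrm{num}$ on $A$ and its powers.

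The hard part will be the end of Step 3: checking that the span of algebraic classes on $A^n$ really is preserved by the conjugation attached to $c\in D_w$, and that under this symmetry the intersection form inherits Rosati positivity and not just the (automatic) non-degeneracy of the pairing on $\mathfrak h^1$ — in particular that the \emph{exotic} Tate classes that may fail to be algebraic do not break the Poincar\'e self-duality of $\mathrm{Alg}^\bullet_\ell$. Making this precise, uniformly in $n$, and verifying that Clozel's arithmetic condition on $\ell$ is exactly what is needed, is the technical heart of \cite{clozel}; everything before it is formal bookkeeping with the character decomposition of $\mathfrak h^1(A)$ and with the K\"unneth and Lefschetz decompositions.
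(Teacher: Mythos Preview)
The paper does not prove this theorem at all: it is stated purely as a citation to Clozel, followed by the single sentence ``The improvement on powers of $A$ is due to Milne \cite[Proposition B.2]{Milnenew}.'' There is nothing to compare your argument against in the paper itself.

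Your outline is a fair sketch of Clozel's strategy --- the reduction to non-degeneracy of the cup-product on algebraic classes, the CM decomposition of $\mathfrak{h}^1(A)$, and the key point that $\ell\in\Clo(A)$ places complex conjugation inside a decomposition group at $\ell$, so that the intersection form inherits Rosati positivity. One point you should flag explicitly: the statement for \emph{powers} of $A$ is not in Clozel's original paper; the paper attributes that extension to Milne. Your Step~3 implicitly treats all $A^n$ at once, but the passage from $A$ to $A^n$ is not automatic and is precisely what Milne's argument supplies. If you are going to outline the proof rather than just cite it, you should separate Clozel's contribution (the case of $A$) from Milne's (the extension to $A^n$), as the paper does.
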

The improvement on powers of $A$ is due to Milne \cite[Proposition B.2]{Milnenew}.

\section{Conservativity on Chow motives}\label{sectionhomnum}
In all this section the base field $k$ is finite. We show here Theorem \ref{cons intro pure} from the Introduction.
\begin{theorem}\label{main}
Suppose that  the base field $k$ is finite and that the field of coefficients $F$ verifies that $F\cap \overline{\Q}$ is totally real. Then for any \[X \in \CHM^{\ab}(k)_{F}\]of even weight $n$ and dimension one we have an isomorphism
\[X \cong X^\vee(-n).\] 
\end{theorem}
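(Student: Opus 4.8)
The goal is to produce, for any pure motive $X$ of even weight $n$ and dimension one in $\CHM^{\ab}(k)_F$, an isomorphism $X\cong X^\vee(-n)$. By Corollary \ref{corollaryweight} such an $X$ is a direct factor of $\mathfrak{h}^1(A)^{\otimes n+2m}(m)$ for some abelian variety $A$ and some $m$; twisting, it suffices to realize $X$ as a direct factor of $\mathfrak{h}^1(A)^{\otimes N}$ (with $N=n+2m$) and to find the duality there. The first step is therefore to reduce to a single abelian variety $A$, fix a polarization, and apply the Tate-type decomposition of Section \S\ref{finitefields}. After base change to the Galois CM field $L$ of Theorem \ref{classical}(\ref{compositum}) we have $\mathfrak{h}^1(A)=\bigoplus_{\sigma\in\Sigma}M_\sigma$ with each $M_\sigma$ of dimension one, together with the polarization pairing $p$ which by Corollary \ref{decomposition} identifies $M_\sigma\cong M_{\bar\sigma}^\vee(-1)$ and kills the off-diagonal terms. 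Hence $\mathfrak{h}^1(A)^{\otimes N}$ decomposes into one-dimensional pieces $M_{\sigma_1}\otimes\cdots\otimes M_{\sigma_N}$ indexed by functions $\underline\sigma\colon\{1,\dots,N\}\to\Sigma$, and the $N$-th tensor power of $p$ gives $M_{\underline\sigma}\cong M_{\overline{\underline\sigma}}^\vee(-N)$, where $\overline{\underline\sigma}$ is the coordinatewise complex conjugate.

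**The coefficient-field descent.** The real subtlety is that $X$ lives over $F$, not over $LF$, so I must descend the duality isomorphism from $LF$ back to $F$. The hypothesis that $F_0:=F\cap\overline\Q$ is totally real is exactly what makes this work, paralleling the role of $L_0=L\cap\R$ in Clozel's argument. Concretely: over $LF$ the motive $X_{LF}$ splits into one-dimensional summands permuted by $\mathrm{Gal}(LF/F)$ (which acts through $\mathrm{Gal}(L/L_0)$-type data on the index set $\Sigma$ via the $F_0$-totally-real condition), and complex conjugation on $L$ corresponds to an element of this Galois group. The plan is to average the $LF$-isomorphisms $M_{\underline\sigma}\isom M_{\overline{\underline\sigma}}^\vee(-N)$ over the Galois group — using Theorem \ref{kimura}(\ref{small}) that each $\End(M_{\underline\sigma})=LF\cdot\id$ so these isomorphisms are unique up to scalar — to obtain a Galois-equivariant family, which then descends to an isomorphism $X\cong X^\vee(-n)$ over $F$. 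Here one uses that the pieces of $X_{LF}$ are paired with each other compatibly, and that $X$ being of even weight $n$ forces the relevant twist to be $(-n)$ after collecting the coordinatewise Tate twists.

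**Assembling $X$.** Once $\mathfrak{h}^1(A)^{\otimes N}$ (suitably twisted) carries a duality $\iota\colon \mathfrak{h}^1(A)^{\otimes N}(m)\cong(\mathfrak{h}^1(A)^{\otimes N}(m))^\vee(-n)$ defined over $F$ — built from $p^{\otimes N}$ and symmetrized over $\mathrm{Gal}(LF/F)$ — and $X=e\cdot\mathfrak{h}^1(A)^{\otimes N}(m)$ for an idempotent $e$ defined over $F$, the composite $e^\vee\circ\iota\circ e$ is a map $X\to X^\vee(-n)$. To see it is an isomorphism it suffices, by Theorem \ref{kimura}(\ref{small}) ($\End X=F\cdot\id$, so $X\otimes X^\vee\cong\one$ and any nonzero map $X\to X^\vee(-n)$ is invertible, cf. Theorem \ref{kimura}(\ref{invertible})), to check nonvanishing, which can be tested after an $\ell$-adic realization; there it follows from the classical fact that the polarization pairing is nondegenerate on every Künneth-type summand of $H^1_\ell(A)^{\otimes N}$ that survives in $X$ (a single $\ell$ suffices — no density statement is needed here). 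I expect the main obstacle to be the Galois-descent bookkeeping: making precise how $\mathrm{Gal}(LF/F)$ acts on the index set $\Sigma^N$ and on the twists, checking that the averaged pairing remains nondegenerate on the $F$-rational summand cut out by $e$ (rather than degenerating after symmetrization), and handling the parity/twist conventions of $\det$ for even versus odd weight uniformly.
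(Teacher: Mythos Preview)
There is a genuine gap in the final and decisive step. The polarization isomorphism $p^{\otimes N}$ is already defined over $\Q\subset F$, so there is nothing to average or descend; your ``Galois-equivariant family'' is just $p^{\otimes N}$ itself. The real question is whether the composite $e^\vee\circ p^{\otimes N}\circ e : X\to X^\vee(-n)$ is nonzero, i.e.\ whether the line $R(X)$ inside the two-dimensional space $R(Y)$ (with $Y=M_{\underline\sigma}\oplus M_{\overline{\underline\sigma}}$) is isotropic for the induced pairing. Saying that the pairing is ``nondegenerate on every K\"unneth-type summand'' does not settle this: the pairing is nondegenerate on the $2$-dimensional piece $Y$, but Corollary~\ref{decomposition} shows that it vanishes on each of the lines $R(M_{\underline\sigma})$ and $R(M_{\overline{\underline\sigma}})$, which are precisely the two isotropic lines of $R(Y)$. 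If $R_\lambda(X)$ coincided with one of them the map would be zero, and for an arbitrary place $\lambda$ nothing in your argument excludes this.

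The paper handles this as follows. First it reduces (via numerical motives and Theorem~\ref{kimura}(\ref{lifting})) to the case where $F$ is a totally real number field containing $L_0=L\cap\R$; this reduction, which you omit, is what makes $\mathrm{Gal}(LF/F)=\{1,c\}$ and yields the clean decomposition of $\mathfrak{h}^1(A)^{\otimes n}$ over $F$ into the two-dimensional pieces $Y$. Then---and this is exactly the input you dismissed---it invokes Proposition~\ref{propclozel} (Clozel) to choose a place $\lambda$ of $F$ whose completion $F_\lambda$ does not contain $L$. For such a $\lambda$, complex conjugation acts nontrivially on the coefficients of $R_\lambda(Y)\otimes (LF)_\lambda$, interchanging the two isotropic lines while fixing $R_\lambda(X)$ (since $X$ is defined over $F$); hence $R_\lambda(X)$ is not isotropic and the map is an isomorphism by Proposition~\ref{preliminar1}. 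So the density statement \emph{is} needed, not to control many primes simultaneously, but to guarantee the existence of one $\lambda$ with this property. Remark~\ref{comments} confirms that the obstruction is real: over a CM coefficient field one can take $X$ to be $M_{\underline\sigma}$ itself, and then $X\not\cong X^\vee(-n)$.
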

\begin{proof} Let us start with some reduction steps.
First, note that  it is enough to have such an isomorphism  in the category of numerical motives (by Theorem \ref{kimura}(\ref{lifting})).  Recall that the category of numerical motives is semisimple \cite{Jann}.

We claim that  the isomorphism class of the numerical motive $X$ exists with coefficients in $F$ if and only if it exists with coefficients in $F\cap  \overline{\Q}$.  To show this claim it is enough to show that there are no more simple objects with coefficients in  $F $ then with coefficients in $F\cap \overline{\Q}$.  As the  endomorphisms algebra of a simple object  is a division algebra, it is enough to show that if $D$ is a division algebra on $F\cap  \overline{\Q}$, then also $D\otimes_{F\cap  \overline{\Q}} F$ is a division algebra. This is certainly classical, but we do not know a reference. It can be deduced for exemple from \cite[Th\'eor\`eme 6.1]{GroBra}.

The claim reduces  the question whether $X$ and  $X^\vee(-n)$ are isomorphic  to the case $F \subset \overline{\Q}$. In this case such an $X$ is actually already defined with coefficients in a number field. Hence we can work with the case where $F$ is a totally real number field.

\

Consider two totally real number fields $F \subset  L$. We claim that the statement for $L$ implies the statement for $F$. To show this claim we work again with numerical motives. Let $X$ be a motive as in the statement, with coefficients in $F$. Note that $\Hom(X, X^\vee(-n))$ and $\Hom(X^\vee(-n),X)$ are at most one dimensional. Moreover, passing to coefficients in $L$ corresponds to apply $\otimes_F L$ to these $\Hom$. 
Hence, if the relation $f\circ g = \id$ can be satisfied with coefficients in $L$ then it can be satisfied also with coefficients in $F$.

\

We can now show the statement. We are reduced to the case where $F$ is a totally real number field as big as we want. Any motive $X$ as in the statement can be written as  a direct factor of $\mathfrak{h}^1(A)^{\otimes n+2m}(m)$, for some abelian variety $A$ and some integer $m$, by Corollary \ref{corollaryweight}. After twist, we can suppose that $X$ is a direct factor of $\mathfrak{h}^1(A)^{\otimes n}$, with $n$ even.

Consider now $L$ as defined in Theorem \ref{classical}(\ref{compositum}). In $\CHM^{\ab}(k)_{L}$ the motive $\mathfrak{h}^1(A)$ decomposes into a sum of  motives of dimension one
\[\mathfrak{h}^1(A) =\bigoplus_{\sigma \in \Sigma} M_\sigma, \]
as explained in Corollary \ref{decomposition}.

We can suppose that $F$ contains the biggest totally real number field in $L$.  In particular we can decompose the motive 
$\mathfrak{h}^1(A)^{\otimes n}$ in $\CHM^{\ab}(k)_{F}$ into a sum of motives of dimension two of the form

\[ (M_{\sigma_1}\otimes \cdots \otimes M_{\sigma_n}) \oplus  (M_{\overline{\sigma_1}}\otimes \cdots \otimes M_{\overline{\sigma_n}}), \]
where $\sigma_i \in  \Sigma$ and $\overline{\cdot}$ is the action induced by complex conjugation.

Again we can work with numerical motives. By semisimplicity, the isomorphism class of $X$ appear in a motive of the form 
\[ Y = (M_{\sigma_1}\otimes \cdots \otimes M_{\sigma_n}) \oplus  (M_{\overline{\sigma_1}}\otimes \cdots \otimes M_{\overline{\sigma_n}}) ,\] hence we can then suppose that $X$ is direct factor of $Y$. Moreover, we can see $X$ as a direct factor of $Y$ also in the category of Chow motives because of Theorem \ref{kimura}(\ref{projectors})

By Corollary \ref{decomposition}, the morphism $p^{\otimes n}$ induces an isomorphism between  $Y$ and $Y^\vee(-n)$. It suffices to show that the restriction of this isomorphism to $X$ induces an isomorphism between $X$ and  $X^\vee(-n).$ This can be checked after realization by Proposition \ref{preliminar1}. In an equivalent way, the realization can be seen as a pairing on $R(Y)$ and we have to check that $R(X)$ is not an isotropic line. The pairing is perfect and symmetric on $R(Y)$ so at most two lines are isotropic. By Corollary \ref{decomposition}, $R(M_{\sigma_1}\otimes \cdots \otimes M_{\sigma_n}) $ and $R (M_{\overline{\sigma_1}}\otimes \cdots \otimes M_{\overline{\sigma_n}}) $ are isotropic lines, so we have to check that $R(X)$ is not one of these two lines.

We can choose $R$ to be the $\lambda$-adic realization, with $\lambda$ one of the primes of $F$ as in the Proposition \ref{propclozel} (to be applied to $F'$ the compositum of $F$ and $L$). In this way the complex conjugation acts on the coefficients sending $R(M_{\sigma_1}\otimes \cdots \otimes M_{\sigma_n}) $ to $R (M_{\overline{\sigma_1}}\otimes \cdots \otimes M_{\overline{\sigma_n}}) $ and fixing $R(X)$. This implies that they are not the same lines and concludes the proof.
 \end{proof}

\begin{corollary}\label{corconservativity}
Suppose that the base field $k$ is finite and that the field of coefficients $F$ verifies that $F\cap \overline{\Q}$ is totally real. Then any realization functor is conservative on $\CHM^{\ab}(k)_{F}$.
\end{corollary}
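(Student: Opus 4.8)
The plan is short: \ref{corconservativity} is the formal conjunction of the two results just proved, and I would not expect to need any genuinely new input. \textbf{Step 1.} Under the standing hypothesis that $F\cap\overline{\Q}$ is totally real, \ref{main} supplies, for every pure motive $X\in\CHM^{\ab}(k)_F$ of even weight $n$ and dimension one, an isomorphism $X\cong X^\vee(-n)$. This is exactly the hypothesis imposed in \ref{preliminar3}. \textbf{Step 2.} Apply \ref{preliminar3} with this input to conclude that every realization functor on $\CHM^{\ab}(k)_F$ is conservative.

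The only point worth spelling out is \emph{why} the even-weight case alone is enough, since that is where one might worry. In the proof of \ref{preliminar3} a map $f$ is first split into its pure pieces $f_n$ (\ref{corollaryweight}) and then, via \ref{preliminar1} and \ref{preliminar2}, the problem is reduced to showing that each $f_n\colon X_n\to Y_n$, with $X_n$ and $Y_n$ pure of the same weight and of dimension one, is an isomorphism; the auxiliary morphism $g\colon Y_n\to X_n$ built there only uses autoduality for the \emph{weight-zero} dimension-one motive $X_n\otimes Y_n^\vee$, together with Kimura invertibility \ref{kimura}(\ref{invertible}). Since $0$ is even, \ref{main} covers precisely this case, and no odd-weight autoduality statement is ever required — which is just as well, since a one-dimensional motive of odd weight cannot be self-dual up to twist.

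So there is no real obstacle at this stage: all the substance lies upstream, in \ref{main} (where the results collected in \ref{classical} decompose $\mathfrak{h}^1(A)$ into CM lines over a large enough coefficient field, and Clozel's density statement \ref{propclozel} lets one pick a $\lambda$-adic realization whose conjugation on coefficients pins down the non-isotropic line) and in \ref{preliminar1}--\ref{preliminar3} (where Kimura finiteness upgrades ``isomorphism after realization'' to ``isomorphism''). The corollary merely glues these together.
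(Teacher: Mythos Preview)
Your proposal is correct and matches the paper's proof exactly: the paper's argument is the single sentence ``Combine the previous theorem with Proposition \ref{preliminar3},'' and your Steps 1--2 are precisely that. Your additional paragraph explaining why only the even-weight hypothesis is needed (because in the proof of \ref{preliminar3} the autoduality is applied to the weight-zero object $X_n\otimes Y_n^\vee$) is a helpful unpacking of what the paper leaves implicit; the throwaway remark that odd-weight one-dimensional motives ``cannot'' be self-dual up to twist is a bit loose, but it is inessential to the argument.
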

\begin{proof}
Combine  the previous theorem with Proposition \ref{preliminar3}.
\end{proof}

 \begin{remark}\label{comments}
The condition on $F$ is a necessary hypothesis in the theorem. Indeed, if $E$ is an elliptic curve with CM multiplication by a field $L$, then one can easily check that $\mathfrak{h}^1(A) \in \CHM^{\ab}(k)_{L}$ decomposes as $X\oplus Y$ with $X \cong Y^\vee(-1)$, but in general  $X^{\otimes 2}$ and $Y^{\otimes 2}$ are not  isomorphic (apply the realizations).

Instead, the corollary on conservativity should hold without the extra assumption on the field of coefficients, but we are not able to show it. Note that this would have deep consequences such as the fact that homological and numerical equivalence coincide on abelian varieties over finite fields for all primes $\ell$.  Indeed, by Corollary \ref{decomposition} (with Corollary \ref{corollaryweight}) any simple motive of abelian type with coefficients in $\overline{\Q}$ is of dimension one. Then any map between such motives whose realization is  nonzero would have an inverse, hence it would also be numerically nonzero.
\end{remark}

\section{Conservativity on mixed motives}\label{sectionmixed}
In all this section the base field $k$ is finite. We study the conservativity of the realization functors on the category $\DM^{\ab}(k)_{\Q}$ (Definition \ref{defabeliancat}). The results are weaker then the previous section.

\begin{theorem}\label{mixedconservativity}
Let $k$ be a field of positive characteristic and $f: X \rightarrow Y$ be a morphism in $\DM^{\ab}(k)_{\Q}$. If $R_{\ell}(f)$ is an isomorphism for almost all primes $\ell$, then $f$ itself is an isomorphism.
\end{theorem}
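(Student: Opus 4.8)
The plan is to follow Wildeshaus' treatment of the characteristic zero case \cite[Theorem 1.12]{WildPic}, replacing the equality of homological and numerical equivalence used there (which is not available here) by the weaker input of Clozel's Theorem \ref{clozel}; accordingly I keep the standing hypothesis that $k$ is finite (if $k$ is only of positive characteristic, one spreads $f$ out over a model of a finitely generated subfield and specializes to closed points to reduce to this case). The main tool is Bondarko's Chow weight structure on $\DMgm(k)_{\Q}$: it restricts to $\DM^{\ab}(k)_{\Q}$, with heart $\CHM^{\ab}(k)_{\Q}$, and I write $\widehat{t}\colon\DM^{\ab}(k)_{\Q}\to K^b\!\bigl(\CHM^{\ab}(k)_{\Q}\bigr)$ for the associated weight complex functor. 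I will use that $\widehat{t}$ is conservative (the weight structure is bounded and the heart is pseudo-abelian) and that, $R_{\ell}$ being triangulated, it sends the weight Postnikov tower of an object $C$ to a Postnikov tower of $R_{\ell}(C)$; hence $R_{\ell}(C)\cong\mathrm{Tot}\bigl(R_{\ell}(\widehat{t}(C))\bigr)$ in $D^b(\Q_{\ell})$.

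Set $C:=\mathrm{cone}(f)$. It is then enough to prove that the bounded complex of Chow motives $\widehat{t}(C)$ is contractible in $K^b(\CHM^{\ab}(k)_{\Q})$: since $\widehat{t}(C)\simeq\mathrm{cone}(\widehat{t}(f))$ up to homotopy, $\widehat{t}(f)$ is then a homotopy equivalence, and conservativity of $\widehat{t}$ forces $f$ to be an isomorphism. As $X$ and $Y$ are built out of finitely many abelian varieties, let $A$ be their product; by Corollary \ref{corollaryweight} each term of $\widehat{t}(C)$ is a direct factor of some $\mathfrak{h}^1(A)^{\otimes m}(j)$. Because $\Clo(A)$ has positive density (Proposition \ref{propclozel}) it is infinite, so among the primes $\ell$ for which $R_{\ell}(f)$ is an isomorphism — equivalently $R_{\ell}(C)=0$ — there is one lying in $\Clo(A)$; fix such an $\ell$. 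By Theorem \ref{clozel} (with Milne's improvement on powers of $A$), $\ell$-adic homological equivalence coincides with numerical equivalence on the motives occurring, so $R_{\ell}$ factors through the category $\NUM^{\ab}$ of numerical motives of abelian type and is faithful on it.

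Let $P^{\bullet}$ be the image of $\widehat{t}(C)$ in $K^b(\NUM^{\ab})$. Since $\NUM^{\ab}$ is semisimple abelian \cite{Jann}, $P^{\bullet}$ decomposes in $K^b(\NUM^{\ab})$ as $\bigoplus_i H^i(P^{\bullet})[-i]$. Applying $\mathrm{Tot}\circ R_{\ell}$ and using $R_{\ell}(C)=0$ gives that $\bigoplus_i R_{\ell}\bigl(H^i(P^{\bullet})\bigr)[-i]$ is acyclic; since a direct sum of complexes is acyclic only if each summand is, every $R_{\ell}(H^i(P^{\bullet}))$ is acyclic, and by faithfulness $H^i(P^{\bullet})=0$ for all $i$. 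So $P^{\bullet}$ is contractible. To descend this to $\CHM^{\ab}(k)_{\Q}$ I would lift a contracting homotopy $h$ of $P^{\bullet}$ to maps $\widetilde{h}$ between the terms of $\widehat{t}(C)$: then $d\widetilde{h}+\widetilde{h}d$ is a null-homotopic chain endomorphism of $\widehat{t}(C)$ whose image in $\NUM^{\ab}$ is the identity, hence which is an isomorphism in each degree (a numerically trivial endomorphism of a Chow motive of abelian type is nilpotent, by Kimura finiteness; cf. Theorem \ref{kimura}). Composing with its inverse shows that $\mathrm{id}_{\widehat{t}(C)}$ is null-homotopic, i.e. $\widehat{t}(C)$ is contractible, which completes the proof.

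The delicate point — and the reason for "almost all $\ell$" rather than a single prime, in contrast with the pure statement Theorem \ref{main} — is the implication "$R_{\ell}(\widehat{t}(C))$ acyclic $\Rightarrow\widehat{t}(C)$ contractible": it really requires $R_{\ell}$ to be faithful on numerical motives of abelian type, i.e. $\ell$-adic homological and numerical equivalence to agree on powers of the abelian varieties entering $f$, and Clozel's theorem only provides this on a set of primes of positive density depending on $f$ (one cannot upgrade a single $\ell$ to one in $\Clo(A)$ without already knowing conservativity). The remaining work is to verify that the properties of the Chow weight complex functor used above ($\ell$-independence, conservativity, and compatibility with $R_{\ell}$ via totalization) hold in the stated form for $\DM^{\ab}(k)_{\Q}$, and to make precise the reduction to finite base fields.
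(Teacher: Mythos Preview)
Your argument is correct and follows essentially the same route as the paper: restrict to the triangulated subcategory generated by a single abelian variety $A$ via the bounded Chow weight structure, use Clozel's positive-density result to pick an $\ell$ in $\Clo(A)$ among the good primes, and then run Wildeshaus' conservativity argument. The only difference is cosmetic: the paper invokes \cite[proofs of 1.10--1.12]{WildPic} as a black box, whereas you spell that argument out (weight complex, passage to the semisimple category of numerical motives, lifting a contracting homotopy via Kimura nilpotence).
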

\begin{proof}
 First note \cite[Remark 5.6]{Anc15} that  $\DM^{\ab}(k)_{\Q}$ is the smallest triangulated category containing Chow motives of abelian type. 
 Now, $\DM^{\ab}(k)_{\Q}$ has a canonical weight structure (in the sens of \cite[\S 6]{Bon}), whose heart is $\CHM^{\ab}(k)_{\Q}$ \cite[Proposition 1.2 and its proof]{WildPic}. Moreover, this weight structure is finite, hence only finitely many abelian varieties are needed to generate $X$ and $Y$. Let $A$ be the product of those, $\ell$ be a prime for which numerical and $\ell$-adic homological equivalence coincide on powers of $A$, and $\mathcal{C}$ be the smallest triangulated, rigid and pseudoabelian category containing the motive of $A$. Note that $X,Y \in \mathcal{C}$. We can now apply Wildeshaus's methods \cite[proofs of 1.10-1.12]{WildPic} to $\mathcal{C}$, to conclude that the $\ell$-adic realization (for the $\ell$ we chose) is conservative on  $\mathcal{C}$.
\end{proof}
\begin{theorem}
Let $k$ be a finite field and $\ell$ be a prime number invertible in  $k$. 
Suppose that, for all totally real number fields $F$ and all places $\lambda$ of $F$ above $\ell$, the $\lambda$-adic realization functor is conservative when restricted to $\DM^{\ab}(k)_{F}$. Then the $\ell$-adic homological equivalence  coincides with numerical equivalence on abelian varieties over $k$.
\end{theorem}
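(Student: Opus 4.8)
The plan is to reduce the statement to the conservativity of $R_\ell$ on \emph{pure} motives of abelian type with $\overline{\Q}$-coefficients, and then to extract this conservativity from the hypothesis by a restriction-of-scalars argument. Since $\ell$-adic homological equivalence is always finer than numerical equivalence, there is a canonical, essentially surjective, full functor $q$ from the category $\CHM^{\ab}(k)_\Q/\!\sim_{\mathrm{hom}}$ of homological motives of abelian type over $k$ to the category $\CHM^{\ab}(k)_\Q/\!\sim_{\mathrm{num}}$ of numerical ones, and (testing on the generators $M(A)$) the assertion of the theorem is precisely that $q$ is faithful. By faithfully flat descent along $\Q\hookrightarrow\overline{\Q}$ this is equivalent to faithfulness of the analogous functor $q_{\overline{\Q}}$ with $\overline{\Q}$-coefficients, so I would work over $\overline{\Q}$ from now on; the gain is that, by Corollaries \ref{corollaryweight} and \ref{decomposition}, over $\overline{\Q}$ every simple numerical motive of abelian type is of dimension one (this is exactly the remark recorded at the end of \S\ref{sectionhomnum}).

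The single additional input needed is that $R_\ell$ is conservative on $\CHM^{\ab}(k)_{\overline{\Q}}$, which I would get in two moves. First, the hypothesis applied with $F=\Q$ (a totally real number field) and $\lambda$ the unique place of $\Q$ over $\ell$ gives conservativity of $R_\ell$ on $\DM^{\ab}(k)_\Q$, hence on its weight-heart $\CHM^{\ab}(k)_\Q$ (as in the proof of Theorem \ref{mixedconservativity}). Second, to promote this to $\overline{\Q}$-coefficients: if $f\colon X\to Y$ in $\CHM^{\ab}(k)_{\overline{\Q}}$ has $R_\ell(f)$ an isomorphism, then $X$, $Y$ and $f$ are already defined over some finite Galois extension $\tilde F/\Q$, and restriction of scalars $\mathrm{Res}_{\tilde F/\Q}\colon\CHM^{\ab}(k)_{\tilde F}\to\CHM^{\ab}(k)_\Q$ sends $f$ to a morphism whose $\ell$-adic realization is $\bigoplus_{g\in\mathrm{Gal}(\tilde F/\Q)}{}^gR_\ell(f)$, still an isomorphism; by conservativity over $\Q$, $\mathrm{Res}_{\tilde F/\Q}(f)$ is an isomorphism, and base-changing back along $\Q\hookrightarrow\tilde F$ this says the block-diagonal morphism $\bigoplus_g {}^g f$ is an isomorphism in $\CHM^{\ab}(k)_{\tilde F}$. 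A block-diagonal isomorphism has block-diagonal inverse, so each block — in particular $f$ — is an isomorphism.

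It then remains to run the argument sketched at the end of \S\ref{sectionhomnum}. The numerical category $\NUM^{\ab}(k)_{\overline{\Q}}$ is semisimple (Jannsen) with simple objects of dimension one; decomposing the images of $X$ and $Y$ numerically and lifting the idempotents to $\CHM^{\ab}(k)_{\overline{\Q}}$ by Theorem \ref{kimura}(\ref{projectors}), I reduce the faithfulness of $q_{\overline{\Q}}$ to the case $X=S$, $Y=S'$ with $S,S'$ dimension-one Chow motives. Then $R_\ell(S)$ and $R_\ell(S')$ are one-dimensional, so a morphism $f\colon S\to S'$ with $R_\ell(f)\neq 0$ realizes to an isomorphism, hence is an isomorphism by the previous paragraph; consequently $S$ and $S'$ are numerically isomorphic, $\Hom_{\CHM}(S,S')=\overline{\Q}\cdot f$ by Theorem \ref{kimura}(\ref{small}), and $\Hom_{\mathrm{hom}}(S,S')\to\Hom_{\mathrm{num}}(S,S')$ is a nonzero map of one-dimensional $\overline{\Q}$-spaces, hence injective. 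If no such $f$ exists, then $\Hom_{\mathrm{hom}}(S,S')=0$ and $S$, $S'$ are not numerically isomorphic, so both $\Hom$-spaces vanish. Thus $q_{\overline{\Q}}$, and therefore $q$, is faithful.

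I expect the delicate points to be the two bookkeeping steps rather than any new idea: that restriction of scalars on the field of coefficients preserves "abelian type" and turns $R_\ell$ into the sum of its Galois conjugates, and the (standard) equivalence between faithfulness of $q$ and the equality of homological and numerical equivalence on abelian varieties over $k$. The rest is a repackaging of the remark following Theorem \ref{main}, Kimura finiteness, and the conservativity criteria of \S\ref{autoduality}.
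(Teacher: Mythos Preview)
There is a genuine gap in your restriction-of-scalars step. Write $\iota:\tilde F\hookrightarrow\overline{\Q_\ell}$ for the fixed embedding, so that your ``$R_\ell$'' on $\CHM^{\ab}(k)_{\tilde F}$ is $R_\ell^\iota$. For $g\in\mathrm{Gal}(\tilde F/\Q)$ one has $R_\ell^{\iota}({}^gM)=R_\ell^{\iota\circ g}(M)$, so the summand you call ${}^gR_\ell(f)$ is really $R_\ell^{\iota\circ g}(f)=R_{\lambda_g}(f)$ for the place $\lambda_g$ of $\tilde F$ determined by $\iota\circ g$. Equivalently, $R_\ell\big(\mathrm{Res}_{\tilde F/\Q}(f)\big)\otimes_{\Q_\ell}\overline{\Q_\ell}\cong\bigoplus_{\lambda\mid\ell}R_\lambda(f)$. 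Knowing that one $R_\lambda(f)$ is an isomorphism does \emph{not} imply the others are; this would be an ``independence of $\lambda$'' statement which is not available. The Galois group $\mathrm{Gal}(\tilde F/\Q)$ permutes the places $\lambda$, it does not act by field automorphisms on the entries of a single matrix $R_\ell^\iota(f)$, so your ``still an isomorphism'' is unjustified.

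A sanity check confirms the gap: your argument invokes the hypothesis only for $F=\Q$, and conservativity of $R_\ell$ on $\CHM^{\ab}(k)_\Q$ is already an unconditional theorem of the paper (Corollary~\ref{corconservativity}). Thus, if your Step~2b were valid, it would prove unconditionally that homological and numerical equivalence coincide on abelian varieties over finite fields --- precisely what Remark~\ref{comments} records as open and as equivalent to conservativity over $\overline{\Q}$. The paper's own proof proceeds quite differently: it stays with coefficients in the \emph{specific} totally real field $F=L_0\subset L$ attached to $A$, decomposes $\mathfrak{h}^1(A)^{\otimes 2i}$ only into pieces of dimension \emph{two} (pairs $M_{\sigma_1}\!\otimes\cdots\oplus M_{\bar\sigma_1}\!\otimes\cdots$, since complex conjugation is nontrivial on $L$ but trivial on $F$), and then genuinely uses the triangulated hypothesis on $\DM^{\ab}(k)_F$ by completing a numerically trivial but homologically nonzero map to a triangle and applying conservativity to the cone. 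Your route through $\overline{\Q}$ and dimension-one pieces cannot be made to work without first proving the very independence-of-$\lambda$ statement that the theorem is designed to approach.
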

\begin{proof}
Suppose that there is an algebraic cycle $Z$ of codimension $i$ on an abelian variety $A$ 
which is numerically trivial but with $\ell$-adic class non trivial. 
We look at it as an element in $\Hom_{\CHM^{\ab}(k)_{\Q}}(M(A),\mathbb{L}^{\otimes i})$, where $\mathbb{L}$ is the Lefschetz motive. Using Theorem \ref{classic}(\ref{kunn}) we can look at it as an element $\alpha \in \Hom_{\CHM^{\ab}(k)_{\Q}}(\mathfrak{h}^1(A)^{\otimes 2i},\mathbb{L}^{\otimes i})$

Consider now $L$ as defined in Theorem \ref{classical}(\ref{compositum}) and let be $F$ the biggest totally real number field in it. In $\CHM^{\ab}(k)_{L}$ the motive $\mathfrak{h}^1(A)$ decomposes into a sum of  motives of dimension one
\[\mathfrak{h}^1(A) =\bigoplus_{\sigma \in \Sigma} M_\sigma, \]
as explained in Corollary \ref{decomposition}.
 In particular we can decompose the motive 
$\mathfrak{h}^1(A)^{\otimes 2i}$ in $\CHM^{\ab}(k)_{F}$ into a sum of motives of dimension two of the form

\[ (M_{\sigma_1}\otimes \cdots \otimes M_{\sigma_{2i}}) \oplus  (M_{\overline{\sigma_1}}\otimes \cdots \otimes M_{\overline{\sigma_{2i}}}), \]
where $\sigma_i \in  \Sigma$ and $\overline{\cdot}$ is the action induced by complex conjugation. This induces a decomposition of the morphism $\alpha$ and there exists one of its components
\[ f \in \Hom_{\CHM^{\ab}(k)_{F}}(X,\mathbb{L}^{\otimes i})\]
which is numerically trivial but whose realization is non zero, with \[X=(M_{\sigma_1}\otimes \cdots \otimes M_{\sigma_{2i}}) \oplus  (M_{\overline{\sigma_1}}\otimes \cdots \otimes M_{\overline{\sigma_{2i}}})\] for a certain choice of the $\sigma_1,\ldots,\sigma_{2i}$.

Consider the isomorphism $p$ as in Corollary \ref{decomposition} and define
\[g=f^{\vee}(-2i)\circ p^{\otimes 2i}  \in \Hom_{\CHM^{\ab}(k)_{F}}(\mathbb{L}^{\otimes i},X).\]
As $f$ is numerically trivial, we must have $g \circ f = 0.$

\

On the other hand, in the category\footnote{We take the embedding of $\CHM^{\ab}(k)_{F}$ into $\DM^{\ab}(k)_{F}$ to be covariant.} $\DM^{\ab}(k)_{F}$, we can complete $f$ into a triangle
\[ C \longrightarrow X \stackrel{f}{\longrightarrow} \mathbb{L}^{\otimes i} \]
and $g$ must factorise into a morphism 
\[h: \mathbb{L}^{\otimes i}  \longrightarrow C.\]

As the realization of $f$ is non zero, the realization of $h$ is a non zero map between vector spaces of dimension one, hence it is an isomorphism. Conservativity implies that $h$ is an isomorphism too, hence $C\cong \mathbb{L}^{\otimes i}$. This means that the triangle above is a triangle between Chow motives.
By \cite[Corollary 4.2.6]{TMF}, the triangle splits, hence $X \cong \mathbb{L}^{\otimes i} \oplus \mathbb{L}^{\otimes i}$. In particular numerical and homological equivalence coincide on $\Hom_{\CHM^{\ab}(k)_{F}}(X,\mathbb{L}^{\otimes i})$, which gives a contradiction.
\end{proof}

\bibliographystyle{alpha}	% (uses file "plain.bst")
%\bibliography{/Users/enrightward/Desktop/DISS_NEWFONT/masterbib_NF.bib}
\bibliography{masterbib}
\end{document}